\documentclass[11pt]{article}

\usepackage[margin=1in]{geometry}
\usepackage{amsmath}
\usepackage{amssymb}
\usepackage{amsthm}
\usepackage{mathtools}
\usepackage{stmaryrd}
\SetSymbolFont{stmry}{bold}{U}{stmry}{m}{n}
\usepackage{nicefrac}
\usepackage{graphicx}
\usepackage{xcolor}
\usepackage{array}
\usepackage{multirow}
\usepackage{cite}
\usepackage{subcaption}
\usepackage{authblk}
\usepackage{iftex}

\usepackage[normalem]{ulem}

\usepackage{hyperref}
\hypersetup{colorlinks=true,linkcolor=blue,citecolor=blue,urlcolor=blue}

\ifpdf
\DeclareGraphicsExtensions{.eps,.pdf,.png,.jpg}
\else
\DeclareGraphicsExtensions{.eps}
\fi
\graphicspath{{figs/}}

\allowdisplaybreaks
\theoremstyle{plain}
\newtheorem{theorem}{Theorem}
\newtheorem{lemma}[theorem]{Lemma}
\newtheorem{corollary}[theorem]{Corollary}
\theoremstyle{remark}
\newtheorem{remark}[theorem]{Remark}

\newcommand{\jump}[1]{\llbracket #1 \rrbracket}
\newcommand{\av}[1]{\{\!\!\{#1\}\!\!\}}

\newcommand{\Ctime}{C_{\mathrm{time}} }
\newcommand{\tend}{t_{\mathrm{end}}}

\renewcommand{\v}{\mathbf{v}}
\renewcommand{\u}{\mathbf{u}}
\newcommand{\w}{\mathbf{w}}
\newcommand{\z}{\mathbf{z}}
\newcommand{\q}{\mathbf{q}}

\newcommand{\F}{\mathbf{F}}
\newcommand{\G}{\mathbf{G}}

\newcommand{\e}{\mathbf{e}}

\newcommand{\E}{\mathcal{E}}

\newcommand{\K}{\mathcal{K}}

\newcommand{\charfun}{\chi}

\newcommand{\dt}{\,\mathrm{d}t}
\newcommand{\bx}{\mathbf{x}}
\newcommand{\hst}{h_{\bx,t}}
\newcommand{\hs}{h_{\bx}}
\newcommand{\htime}{h_t}

\newcommand{\B}{\mathbf{B}}

\newcommand{\M}{\mathbf{M}}

\renewcommand{\P}{\mathbf{P}}

\newcommand{\Grad}{\nabla}
\newcommand{\Curl}{\nabla \times}
\newcommand{\Div}{\nabla \cdot}

\newcommand{\nvec}{\mathbf{n}}
\newcommand{\dx}{\,\mathrm{d}\mathbf{x}}
\newcommand{\dS}{\, \mathrm{d}S}

\newcommand{\Pdiv}{\mathbf{P}_{k+1}^{\operatorname{div}}(\mathcal{T}_h)}

\title{An arbitrarily high-order accurate and locally energy/entropy-stable space--time method for a class of nonlinear wave systems}

\author[1,2]{Michael Dumbser}
\author[3]{Ilaria Perugia}
\author[3]{Enrico Zampa}
\affil[1]{Laboratory of Applied Mathematics, DICAM, University of Trento, via Mesiano 77, 38123 Trento, Italy (\texttt{michael.dumbser@unitn.it})}
\affil[2]{Department of Mathematics, Southern University of Science and Technology, Xueyuan Avenue 1088, 518055 Shenzhen, Guangdong, China}
\affil[3]{Faculty of Mathematics, University of Vienna, Oskar-Morgenstern-Platz 1, 1090 Vienna, Austria (\texttt{ilaria.perugia@univie.ac.at}, \texttt{enrico.zampa@univie.ac.at})}
\date{}

\begin{document}

\maketitle

\begin{abstract}
We present a new class of structure-preserving, arbitrarily high-order accurate space--time schemes using 
unstructured simplicial meshes in space
for a class of nonlinear acoustic wave propagation systems, in which the nonlinearity is encoded in the constitutive relations between the state and dual variables, while the differential operators retain the standard first-order acoustic wave structure. 
At the fully discrete level, the schemes satisfy the following properties: I) local conservation of the state variables; II) a local energy/entropy balance on each space--time element, with a consistent, single-valued numerical energy/entropy flux and nonnegative local energy/entropy dissipation, and exact energy conservation when the stabilization terms are omitted; III) weak preservation of the curl involution of the velocity field, even in the presence of discontinuities and with the shock-capturing terms activated. The key ingredients of the new schemes are: i) a discontinuous Galerkin (DG) discretization in space, in which the dual variables are computed as the $L^2$ projections onto the DG space of the energy derivatives with respect to the conserved variables; 
ii) numerical fluxes and a bubble-weighted, element-local viscous regularization expressed in terms of the dual variables; iii) a continuous--discontinuous Petrov--Galerkin method in time. 
Properties~I)--III) are first proved mathematically and are then verified by suitable numerical tests.
\end{abstract}

\medskip
\noindent\textbf{Keywords:} space--time discretization, discontinuous Galerkin (DG) methods, energy/entropy stability, curl-free scheme, shock capturing, nonlinear wave systems

\medskip
\noindent\textbf{MSC codes:} 65M60, 65M12, 35L65

\section{Introduction}
Considerable research has been devoted to the development of numerical methods that preserve local or global invariants of partial differential equations (PDE). Historically, this effort has been pursued from two distinct but complementary perspectives.
\paragraph{Invariant-preserving finite element methods}
This line of research has focused predominantly on constructing finite element methods that exactly conserve one or more nonlinear \emph{global} invariants. Prominent examples include the conservation of kinetic energy and helicity for the incompressible Euler equations \cite{RebholzHelicity}, as well as energy, cross-helicity, and magnetic helicity for incompressible magnetohydrodynamics \cite{HuLeeXu21,GawlikGB22, MaoXi25}. A systematic approach to conserve global invariants has recently been proposed in \cite{AndrewsFarrell25}. While the global conservation of a coercive invariant, such as the energy, ensures long-term numerical stability, it may not be enough to capture localized physical phenomena, such as shocks and discontinuities. Notable exceptions that successfully enforce \emph{local} invariant preservation remain relatively scarce in the literature (see, e.g., \cite{BeiraoLopezVacca_HamiltonianWave2d}).

A closely related line of research focuses on the geometric properties of the continuous system, such as symplecticity, Poisson brackets, and gradient flow structures, and on preserving these properties at the discrete level \cite{SanchezSymplecticWave, SanchezSymplecticElastodynamics, SanchezSymplecticMaxwell, SanchezSymplecticSemilinearwave, EggerDissipative, EggerShashkov21, LombardiPagliantini25, AndrewsFarrell26}. Again, these are typically global structures. In the context of Hamiltonian problems, notable exceptions include recent works on multisymplecticity \cite{McLachlanSternMS, SternZampa26, SternZampa27}, a property that can be regarded as a local version of symplecticity, and on port-Hamiltonian structures \cite{DGPH, DGlinearPH}. 

\paragraph{Entropy-stable methods for hyperbolic systems}
For hyperbolic systems of conservation laws, satisfying a discrete entropy inequality is crucial for selecting physically meaningful weak solutions. For discontinuous Galerkin (DG) methods, the first cell entropy inequality for scalar conservation laws with the square entropy is due to Jiang and Shu \cite{JiangShu1994}.  
For systems admitting a physical entropy, such as the Euler equations of compressible gasdynamics, one can instead evolve the entropy density directly and recover conservation of the energy density through
a compatible discretization; see, e.g., recent work on hyperbolic thermodynamically compatible (HTC) schemes \cite{HTCGPR,BustoDumbserMHD,AbgrallBustoDumbserGeneralFramework}. A vast body of literature exists for finite volume (FV) and DG methods. These approaches generally fall into three broad categories:
\begin{itemize}
    \item \emph{Change-of-variable formulations:} First pioneered by Hughes, Franca, and Mallet \cite{HughesFrancaMallet_86I} in the context of continuous finite element methods, and later extended to DG schemes (e.g., \cite{Barth99, HiltebrandMishra_spacetimeDG, ZakerzadehMay}), this approach replaces all occurrences of the conservative variables by their expressions in terms of the dual (entropy) variables. While mathematically elegant, it lets the time derivative act on a nonlinear function of the unknowns, so that the mass matrix formally depends on the solution, giving rise to a so-called nonlinear mass matrix. Moreover, retaining local conservation requires recovering the conservative variables from the entropy variables by solving local nonlinear problems, even when explicit time integrators are employed.
    \item \emph{SBP and specialized flux frameworks:} The second category relies on summation-by-parts (SBP) operators combined with carefully tailored quadrature rules and two-point entropy-conservative numerical fluxes in the sense of Tadmor \cite{Tadmor1}. While highly successful, these methods require delicate algebraic constructions and restrictive  
    quadrature constraints to strictly enforce stability. A non-exhaustive list of such contributions includes \cite{FisherCarpenter2013,CarpenterFisherNielsenFrankel2014,Gassner2013,GassnerWintersKopriva2016,Ortleb2017,ChenShu17,Ranocha17, RanochaOffnerSonar2017,Chan18}. Recently, an entropy-correction artificial viscosity, with a coefficient computed from the local violation of a cell entropy inequality, has been proposed to obtain entropy-stable standard DG methods without summation-by-parts operators and flux differencing \cite{ChanEntropyStableCorrectionDG}. For a comprehensive overview, see~\cite{ChenShuReview} and the references therein.
    \item \emph{HTC schemes:} The category of recent HTC schemes, see e.g. \cite{HTCGPR,AbgrallBustoDumbserGeneralFramework, BustoDumbserNewClass,BustoDumbserMHD,HTCDG}, is a dual approach to the pioneering work of Tadmor \cite{Tadmor1,Tadmor2003}. While the latter evolves the total energy density and obtains the entropy inequality as a consequence, the recent HTC approach evolves the entropy density directly and obtains local total energy conservation as a consequence. In both cases, the flux is decomposed into an exactly compatible inviscid flux and a provably entropy-stable viscous flux, which adds a nonnegative artificial dissipation term. The inviscid numerical flux is either constructed directly by means of a path integral in the thermodynamic dual variables \cite{Tadmor1,HTCGPR}, or it is obtained from a central flux by means of a correction factor chosen to satisfy the required extra conservation law. The path integral approach goes back to the work of Tadmor~\cite{Tadmor1}, while the latter construction, known as the Abgrall correction, was introduced in~\cite{AbgrallCorrection}. Both approaches give exact compatibility of the inviscid flux. 
\end{itemize}
\smallskip

To address some of the limitations of these existing approaches, 
we propose a novel DG method that employs both conservative and dual variables to preserve the relevant invariants \emph{locally} and globally. The spatial discretization does not require special quadrature rules or specific choices of basis functions. Because the time derivative acts on the conservative variables, the mass matrix is the standard, solution-independent one, and no nonlinear problems need to be solved to recover the conserved variables from the dual variables when explicit time integrators are used. 
We construct a stabilization and viscous regularization that strictly dissipates energy/entropy while simultaneously preserving the involutions of the system. For the time discretization, we use a continuous--discontinuous (CG--DG) Petrov--Galerkin method in time. This allows 
all structural properties to
hold exactly at the fully discrete level; in particular, the energy is exactly conserved when the stabilization and viscous regularization terms are omitted. This time discretization is implicit and requires the solution of a nonlinear system on each space--time slab. With explicit Runge--Kutta schemes, by contrast, the method remains free of nonlinear solves and retains local conservation and the curl involution, but not the exact energy/entropy balance.

As a model problem, we consider a class of nonlinear acoustic wave propagation systems, in which the nonlinearity is confined to the constitutive relations between the state and dual variables, while the differential operators retain the standard first-order acoustic wave structure. 
These are the simplest nontrivial systems of conservation laws exhibiting all three of the following features: nonlinearity, symmetric hyperbolicity for positive definite energy Hessians, and involutions. This class of systems, together with the nonlinear Maxwell and generalized-Lagrangian-multiplier-augmented Maxwell (Maxwell--GLM) systems, has recently been discretized in~\cite{LuccaDumbser2026} by exactly globally energy-conserving and involution-preserving semi-implicit finite volume schemes of second order in space and time on staggered Cartesian meshes. The present work addresses similar objectives through a different approach, allowing for arbitrarily high order of accuracy in space and time, unstructured meshes in space, local energy balances, and dissipative shock capturing.\medskip

The rest of the paper is organized as follows. In Section~\ref{sec:modelproblem}, we introduce the class of nonlinear acoustic wave propagation systems considered in this work, discuss the associated energy structure, and introduce its viscous regularization, which provides the basis for the dissipative stabilization used in the numerical method. In Section~\ref{sec:discrx}, we introduce the spatial DG discretization with upwind-type fluxes and prove local conservation of the state variables, local and global conservation or dissipation of the energy/entropy, and discrete preservation of the curl involution. In Section~\ref{sec:discrxt}, we complete the method with the CG--DG time discretization and establish the corresponding properties for the fully discrete scheme. Section~\ref{sec:numer} presents numerical results, and Section~\ref{sec:conclusions} concludes the paper.

\section{Governing equations}\label{sec:modelproblem}
Let~$\Omega\subset \mathbb{R}^d$, $d=1,2,3$, be a bounded Lipschitz domain, and~$T>0$ a given final time. In~$Q_T:=\Omega\times (0,T)$, we consider the following nonlinear  acoustic wave propagation system:
\begin{subequations}
\label{eq:nonlinear_acoustics}
    \begin{align}
\label{eq:nonlinear_acoustics_a}
        \partial_t \v + \Grad s &= 0, \\
\label{eq:nonlinear_acoustics_b}
        \partial_t \rho + \Div \u &= 0,
    \end{align}
\end{subequations}
completed with initial conditions and with either periodic boundary conditions or the wall boundary condition $\u\cdot\nvec = 0$ on $\partial\Omega$, where $\nvec$ denotes the outward unit normal on $\partial\Omega$.
Here, $\v$ and~$\rho$ denote the state variables, and~$\u$ and~$s$ the corresponding dual variables, defined through the energy density of the system~$\E=\E(\v,\rho)$ by 
\begin{equation}
\u = \frac{ \partial \E}{\partial \v} \quad \text{and}\quad s= \frac{ \partial \E}{\partial \rho}.
\end{equation}
We assume that~$\E$ is \emph{convex} and, for simplicity, \emph{additively separable}, i.e.
\begin{equation}\label{eq:energy}
\E(\v,\rho) = \E_{\v}(\v) + \E_{\rho}(\rho).
\end{equation}
If the Hessian of $\E$ is positive definite, system~\eqref{eq:nonlinear_acoustics} is a \emph{symmetric hyperbolic} system. 
Separability is only used to motivate the choice of the regularization coefficients in Section~\ref{sec:regularization}; the discrete results of Sections~\ref{sec:discrx} and~\ref{sec:discrxt} hold for a general convex energy, with coefficients $H$ and $\mathbf{H}$ in~\eqref{eq:coeffH} below depending on $(\v,\rho)$.

In particular, when $\E(\v, \rho) = \frac{1}{2}\lvert \v\rvert^2 + \frac{1}{2}c^2\rho^2$, with $c$ a positive constant, the linear wave equation is recovered:
    \begin{align*}
        \partial_t \v + c^2 \Grad \rho &= 0, \\
        \partial_t \rho + \Div \v &= 0.
    \end{align*}
In one space dimension and for $\E(v,\rho)=\frac{1}{2}v^2+g(\rho)$ with $g''>0$, system~\eqref{eq:nonlinear_acoustics} is the $p$-system $\partial_t w-\partial_x v=0$, $\partial_t v+\partial_x p(w)=0$, with $w=-\rho$ and $p(w)=g'(-w)$, so that $p'<0$; its Riemann problem can be solved exactly.

For a separable energy as in~\eqref{eq:energy} and a $d$-dimensional unit vector $\nvec$, the eigenvalues of
the Jacobian of the flux
in the direction $\nvec$ are
\begin{equation}
    \lambda_{1,\ldots,d-1}=0, \qquad \lambda_{d,d+1}=\pm c, \qquad c=\sqrt{\frac{\partial^2 \E}{\partial \rho^2}\,\nvec^T \frac{\partial^2 \E}{\partial \v^2}\,\nvec}.
    \label{eq:eigenvalues}
\end{equation}

If $(\v, \rho)$ is a \emph{strong} solution of~\eqref{eq:nonlinear_acoustics}, multiplying the first equation by~$\u$ and the second by~$s$, taking the sum, and using standard vector calculus identities, we obtain the local energy conservation law in differential form:
\begin{equation}
    \partial_t \E + \Div (s \u) = 0.
    \label{eq:local_energy_cons_diff}
\end{equation}
When $(\v,\rho)$ is only a \emph{weak} solution, the conservation law \eqref{eq:local_energy_cons_diff} generally no longer holds as an equality. Instead, one requests that 
\begin{equation}
\partial_t \E + \Div (s\u) \le 0,
\label{eq:local_entropy_inequality}
\end{equation}
in the sense of distributions, thereby ensuring that the solution is physically relevant. In the terminology of hyperbolic conservation laws, $\E$ is called a (mathematical) \emph{entropy}, $s\u$ an \emph{entropy flux}, and $\u$ and $s$ the corresponding \emph{entropy variables}, also called \emph{main field} by Ruggeri and Strumia~\cite{Ruggeri81}. The inequality in~\eqref{eq:local_entropy_inequality} is referred to as the \emph{entropy inequality}. See e.g. \cite[Chapter 80]{EGIII} and references therein for more details on hyperbolic systems. For this reason, throughout the remainder of the paper we will use the term \emph{energy/entropy} when referring to $\E$. 

Integrating \eqref{eq:local_energy_cons_diff} over a subdomain $\K$ of~$\Omega$ and using the divergence theorem, we obtain the local energy conservation law in integral form:
\begin{equation*}
    \int_{\K} \partial_t \E\dx + \int_{\partial \K}s \u \cdot \nvec_{\partial\K} \dS = 0,
        \label{eq:local_energy_cons_int}
\end{equation*}
where~$\nvec_{\partial\K}$ denotes the unit normal vector on~$\partial \K$ pointing outside~$\K$.

\begin{remark}[Abstract system]\label{rem:abstract_system}
System \eqref{eq:nonlinear_acoustics} is a special case of the abstract system 
\begin{subequations}
\label{eq:diff_forms}
\begin{align}
    \partial_t v + \mathrm{d} s &= 0, 
    \label{eq:diff_forms_a}\\
    \partial_t \rho - \delta u &= 0,
    \label{eq:diff_forms_b}
    \end{align}
\end{subequations}
with $\rho$ and $v$ being differential $k$- and $(k+1)$-forms respectively, $\mathrm{d}$ and $\delta$ the exterior derivative and coderivative (see e.g. \cite{AFW06} for a definition), and 
\begin{equation*}
u = \frac{ \partial \E}{\partial v} \quad \text{and}\quad s= \frac{ \partial \E}{\partial \rho}.
\end{equation*}
The construction and the proofs of this work extend to \eqref{eq:diff_forms}. In particular, they extend to the case $k = 1$, which corresponds to a nonlinear Maxwell system:
    \begin{align*}
    \partial_t \v + \Curl \mathbf{s} &= 0, \\
    \partial_t \boldsymbol{\rho} - \Curl \u &= 0,
    \end{align*}
with $\u = \frac{\partial \E}{\partial \v}$ and $\mathbf{s} = \frac{ \partial \E}{ \partial \boldsymbol{\rho}}$.
Energy-conserving and involution-preserving discretizations of this system and of the nonlinear Maxwell--GLM system were constructed in~\cite{DumbserLuccaPeshkovZanotti2025,LuccaDumbser2026}.
However, for clarity, we confine our presentation to \eqref{eq:nonlinear_acoustics}.
\hfill $\blacksquare$
\end{remark}

\subsection{Viscous regularization}\label{sec:regularization}
System~\eqref{eq:nonlinear_acoustics} is a nonlinear hyperbolic system and may therefore develop shocks. Hence, for both the continuous analysis and the numerical stability, we introduce a viscous regularization. At the same time, the regularization should preserve the key structural properties of the system, i.e. the resulting formulation should dissipate energy/entropy and maintain the involution $\partial_t \Curl \v = 0$. 

 We first consider the naive regularization 
\begin{subequations}
\label{eq:nonlinear_acoustics_regnaive}
\begin{align}
\partial_t \v + \Grad s &= \varepsilon \Delta \v, \label{eq:nonlinear_acoustics_regnaive_v}\\
\partial_t \rho + \Div \u &= \varepsilon \Delta \rho. \label{eq:nonlinear_acoustics_regnaive_rho}
\end{align}
\end{subequations}
Using the identity 
\begin{equation}
    \Delta \v = \Grad \Div \v - \Curl \Curl \v,
    \label{eq:vector_laplacian}
\end{equation}
we obtain that, for periodic boundary conditions, if $\Curl \v = 0$ holds at the initial time, it holds at all times. Moreover, multiplying \eqref{eq:nonlinear_acoustics_regnaive_v} by~$\u$, \eqref{eq:nonlinear_acoustics_regnaive_rho} by $s$, adding the two equations, and integrating by parts over a subdomain $\K$ give
\begin{equation}
    \int_{\K} \partial_t \E \dx + \int_{\partial \K}\left( s \u - \varepsilon ( \nabla \v)^T\u - \varepsilon s\Grad \rho\right)\cdot \nvec_{\partial\K} \dS = -\varepsilon\int_{\K}
    \Grad \v\cdot \Grad \u\dx - \varepsilon \int_{\K}\Grad \rho\cdot \Grad s\dx.
\label{eq:local_energy_cons_reg_naive_int}
\end{equation}
We now study the sign of the right-hand side. 
Using the chain rule, we have
\begin{equation}
\Grad \u = \Grad \frac{\partial \E}{\partial \v} = \frac{\partial^2 \E}{\partial \v^2} \Grad \v,
     \qquad
    \Grad s = \Grad \frac{ \partial \E}{\partial \rho} = 
    \frac{\partial^2 \E}{\partial\rho^2}
    \Grad \rho.
    \label{eq:chain_rule}
\end{equation}
It follows that 
\begin{equation*}
\int_{\K}\Grad \v\cdot \Grad \u\dx = \int_{\K}\Grad \v \cdot\frac{\partial^2 \E}{\partial \v^2} \Grad \v\dx,
     \qquad
     \int_{\K}\Grad \rho\cdot \Grad s\dx = \int_{\K} \Grad \rho\cdot\frac{ \partial^2 \E}
    {\partial\rho^2}
    \Grad \rho\dx.
\end{equation*}
Therefore, if the energy~$\E$ is convex, the right-hand side of \eqref{eq:local_energy_cons_reg_naive_int} is nonpositive, and therefore the viscous regularization correctly dissipates energy locally. 
Leaving aside the case of a non-convex energy, the main problem of~\eqref{eq:nonlinear_acoustics_regnaive} is that the chain rule \eqref{eq:chain_rule} is not satisfied by most numerical methods, and therefore the regularization~\eqref{eq:nonlinear_acoustics_regnaive} is not appealing from a numerical point of view.

The above considerations, together with the identity~\eqref{eq:vector_laplacian}, which shows that on curl-free fields the vector Laplacian reduces to its grad--div part, suggest an alternative regularization:
\begin{subequations}
\label{eq:nonlinear_acoustics_reg}
\begin{align}
\partial_t \v + \Grad s &= \varepsilon \nabla \bigl(H(\v)\Div \u\bigr), \label{eq:nonlinear_acoustics_reg_v}\\
\partial_t \rho + \Div \u &= \varepsilon \Div \bigl(\mathbf{H}(\rho)\Grad s\bigr). \label{eq:nonlinear_acoustics_reg_rho}
\end{align}
\end{subequations}
Here, $H(\v) \in \mathbb{R}$ and $\mathbf{H}(\rho)\in \mathbb{R}^{d\times d}$ denote scalar- and matrix-valued positive (semi-)definite coefficient fields, respectively, which will be specified later. For the wall boundary condition, system \eqref{eq:nonlinear_acoustics_reg} is completed by $\u\cdot\nvec=0$ and $(\mathbf{H}(\rho)\Grad s)\cdot\nvec=0$ on~$\partial\Omega$.

Multiplying \eqref{eq:nonlinear_acoustics_reg_v} by $\u$ and \eqref{eq:nonlinear_acoustics_reg_rho} by $s$, and integrating by parts over a subdomain $\K$, we obtain
\begin{equation*}
\partial_t\int_{\K}  \E \dx
+ \int_{\partial \K}(\F + \G)\cdot \nvec_{\partial\K}\dS = - \Pi_{\K},
\end{equation*}
with the terms $\F$, $\G$ and $\Pi_{\K}$ defined as
\begin{align*}
    \F &=  s \u,\\
    \G &= - \varepsilon \bigl( H(\v)(\Div \u )\u + \mathbf{H}(\rho)s\Grad s \bigr), \\
    \Pi_{\K} &= \varepsilon  \int_{\K}(H(\v)(\Div \u)^2+ \mathbf{H}(\rho) \Grad s\cdot \Grad s)\dx.
\end{align*}
By the positivity assumptions on $H(\v)$ and $\mathbf{H}(\rho)$, we have~$\Pi_{\K} \geq 0$, regardless of the convexity of $\E$, yielding a local energy dissipation inequality; accordingly, $\Pi_{\K}$ is called the local energy/entropy dissipation. 
Finally, taking the $\Curl$ of \eqref{eq:nonlinear_acoustics_reg_v} shows that the involution $\partial_t \Curl \v = 0$ is preserved.

\paragraph{Choice of $H(\v)$ and $\mathbf{H}(\rho)$}
The chain rule \eqref{eq:chain_rule} suggests that, if the Hessian of the energy is positive definite, a natural choice is 
\begin{equation}\label{eq:coeffH}
    H(\v) = \left \lVert \left[ \frac{ \partial^2 \E}{\partial \v^2} \right]^{-1} \right \rVert, \qquad \mathbf{H}(\rho) = \left[\frac{ \partial^2 \E}{\partial \rho^2}\right]^{-1} \mathbf{I}.
\end{equation}
with $\lVert \cdot \rVert$ being any induced matrix norm and~$\mathbf{I}$ the identity tensor in~$\mathbb{R}^d$. In fact, if the Hessian of $\E$ is positive definite, so is its inverse, and therefore $H(\v)>0$ and $\mathbf{H}(\rho)$ is positive definite.

\subsection{Global Hamiltonian point of view}
The regularized system \eqref{eq:nonlinear_acoustics_reg} can be written as a Lie--Poisson system with gradient flow dissipation:
\begin{equation*}
    \partial_t \begin{pmatrix} \v \\ \rho \end{pmatrix} = \B \begin{pmatrix} \nicefrac{ \partial \E}{\partial \v} \\ \nicefrac{ \partial \E}{\partial \rho} \end{pmatrix} + \M(\v, \rho) \begin{pmatrix} \nicefrac{ \partial \E}{\partial \v} \\ \nicefrac{ \partial \E}{\partial \rho} \end{pmatrix}
\end{equation*}
Here $\B$ is the Lie--Poisson operator $
    \B \coloneqq \begin{pmatrix} & - \Grad \\ -\Div & \end{pmatrix},
$
which, for periodic or wall boundary conditions, is skew-symmetric and satisfies the Jacobi identity, while the gradient flow operator 
\begin{equation*}
    \M(\v, \rho): \begin{pmatrix} \w \\ r\end{pmatrix} \mapsto \begin{pmatrix} \varepsilon\Grad (H(\v) \Div \w) \\ \varepsilon \Div(\mathbf{H}(\rho) \Grad r )\end{pmatrix}
\end{equation*}
is a negative semi-definite operator. Within this framework, global energy/entropy balance is a straightforward consequence of the skew-symmetry of $\B$ and the negative semi-definiteness of $\M(\v, \rho)$. 

Regarding discretization, substantial literature has focused first on Hamiltonian and Poisson problems \cite{SanchezSymplecticWave, SanchezSymplecticElastodynamics, SanchezSymplecticMaxwell, SanchezSymplecticSemilinearwave}, and more recently on systems that incorporate gradient dissipation \cite{EggerDissipative, EggerShashkov21, LombardiPagliantini25, AndrewsFarrell26}. The primary objective of these works has been the development of structure-preserving discretizations that maintain skew-symmetry and negative semi-definiteness at the discrete level, thereby enforcing a global discrete energy/entropy balance. Nevertheless, these studies focus on global structures and global energy/entropy balance laws and often neglect \emph{local} structural properties.

\begin{remark}[On the connection with port-Hamiltonian problems and their DG discretizations]
System~\eqref{eq:nonlinear_acoustics} can also be regarded as a port-Hamiltonian system with the linear Dirac structure $
    \begin{pmatrix}
        0 & \Grad \\
        \Div & 0
    \end{pmatrix}$, 
see \cite{DGPH}. Following the terminology of port-Hamiltonian systems, $\u$ and $s$ are the \emph{effort} variables, while $\Div\u$ and $\Grad s$ constitute the corresponding \emph{flows}, up to the sign convention adopted for the latter. 
DG spatial semidiscretizations of this structure have been studied, for instance, in \cite{DGlinearPH}. The present work differs from these contributions in several aspects. In particular, the Hamiltonian considered here is not restricted to be quadratic and may be fully nonlinear. Moreover, we investigate the preservation of the involution constraint and, more importantly, develop a compatible shock-capturing mechanism suitable for the treatment of nonsmooth solutions, which may naturally arise in nonlinear hyperbolic systems. 
\hfill $\blacksquare$
\end{remark}

\section{Spatial semidiscretization}\label{sec:discrx}

In this section, we discretize  the regularized system~\eqref{eq:nonlinear_acoustics_reg} in space only and study the properties of the associated semidiscrete formulation. In the following, we assume that all spatial and temporal integrals are evaluated exactly.

\subsection{Notation, mesh, forms and polynomial spaces}\label{sec:meshes}
We denote by $(\cdot,\cdot)_{\K}$ the $L^2$ inner product over a subdomain $\K \subseteq \Omega$ and by $\langle\cdot,\cdot\rangle_{\partial\K}$ the $L^2$ inner product over its boundary $\partial\K$.
Let $\mathcal{T}_h$ be a conforming, simplicial mesh covering $\Omega$. The skeleton, i.e. the set of all facets $e$ of $\mathcal{T}_h$, is denoted by $\mathcal{S}_h$. 
We define
\begin{align*}
(\cdot, \cdot)_{\mathcal{T}_h} &\coloneqq \sum_{K \in \mathcal{T}_h}(\cdot, \cdot)_{K},\\
\langle \cdot, \cdot\rangle_{\partial \mathcal{T}_h} &\coloneqq \sum_{K \in \mathcal{T}_h}\langle \cdot, \cdot\rangle_{\partial K}.
\end{align*}

Let $K$ be an element of $\mathcal{T}_h$, and let $e$ be a facet of $K$. We denote by $K_e$ the element of $\mathcal{T}_h$ that shares $e$ with $K$.
On the boundary $ \partial K$ of an element~$K$, we define the \emph{average} and the \emph{jump} of a scalar-valued function $\xi$ in the broken space $H^1(\mathcal{T}_h):=\bigoplus_{K\in\mathcal{T}_h}H^1(K)$ as
\begin{align*}
    \av{\xi}_{|e} & = \begin{cases}\frac{1}{2}(\xi_{|K} + \xi_{|K_e}) \qquad &\text{ if $K_e$ exists,}\\
    \xi_{|K}\qquad &\text{ otherwise;}
    \end{cases} \\
    \jump{\xi}_{|e} &= \begin{cases}(\xi_{|K} - \xi_{|K_e}) \qquad &\text{ if $K_e$ exists,} \\
    0 \qquad & \text{ otherwise.}
    \end{cases}
\end{align*}
We define the \emph{average} and the \emph{jump} of a vector-valued function~$\boldsymbol{\xi}$ in the broken space $H^1(\mathcal{T}_h)^d$ in the same way.
We also define the \emph{cross average} $\av{a,b}_{\times} = \av{a}\av{b}-\frac{1}{4}\jump{a}\jump{b}$. Finally, we set
\begin{equation*}
    \overline{\xi}_{|e} = \begin{cases} \max\{\lVert \xi_{|K} \rVert_{L^\infty(e)}, \lVert \xi_{|K_e} \rVert_{L^{\infty}(e)}\} \qquad & \text{ if $K_e$ exists,}\\
    0 \qquad & \text{ otherwise.}
    \end{cases}
\end{equation*}

Next, we introduce forms associated with the regularization terms. Let~$\varepsilon$ be a nonnegative function, possibly discontinuous across element interfaces. Ideally, $\varepsilon$ should be chosen such that $\varepsilon \approx 0$ in smooth regions and $\varepsilon = O(h)$ in the presence of shocks/discontinuities. We discuss some possible strategies to choose $\varepsilon$ in Section~\ref{sec:numer} below. Furthermore, let~$\alpha_h$ be the unique element in $H^1_0(\Omega)$ such that, for each element $K\in\mathcal{T}_h$, ${\alpha_h}_{|K}\in P_{d+1}(K)$, ${\alpha_h}_{|\partial K} = 0$, and $\alpha_h(\mathbf{x}_K) = 1$, with $\mathbf{x}_K$ the barycenter of $K$ (elementwise bubble function). Then, we define the div--div form
\begin{equation}\label{eq:bh}
        b_h(\v_h; \u_h, \w_h)  \coloneqq (\alpha_h\varepsilon H(\v_h) \Div \u_h, \Div \w_h)_{ \mathcal{T}_h} \qquad \forall \v_h, \u_h, \w_h \in H^1(\mathcal{T}_h)^d,
\end{equation}
and the grad--grad form
\begin{equation}\label{eq:ch}
    c_h(\rho_h; s_h, \eta_h) \coloneqq (\alpha_h\varepsilon\mathbf{H}(\rho_h)\nabla s_h, \nabla \eta_h)_{\mathcal{T}_h} \qquad\forall \rho_h, s_h, \eta_h \in H^1(\mathcal{T}_h). 
\end{equation}
Finally, denoting by~$P_k(K)$ the space of polynomials of degree at most~$k$ on~$K$, we set
\begin{equation*}
P_k(\mathcal{T}_h):=\{\eta_h\in H^1(\mathcal{T}_h): {\eta_h}_{|K}\in P_k(K)\quad \forall K\in \mathcal{T}_h\},\qquad \mathbf{P}_k(\mathcal{T}_h):=P_k(\mathcal{T}_h)^d.
\end{equation*}

\subsection{The semidiscrete method}\label{sec:semidiscrete}
Here and in the following, we denote by~$\nvec$ the unit normal vector on the skeleton of $\mathcal{T}_h$ pointing outward from the element under consideration. The semidiscretization in space of problem~\eqref{eq:nonlinear_acoustics_reg} reads as follows: Find $\v_h, \u_h \in \mathbf{P}_k(\mathcal{T}_h)$, $\rho_h, s_h \in P_k(\mathcal{T}_h)$ satisfying
\begin{subequations}
\label{eq:DG_scheme}
    \begin{alignat}{2}
        (\partial_t \v_h, \w_h)_{\mathcal{T}_h} - (s_h, \Div \w_h)_{\mathcal{T}_h} + \langle \widehat{s}_h, \w_h \cdot \nvec\rangle_{\partial \mathcal{T}_h} & = - b_h(\v_h;\u_h, \w_h) \quad &&\forall \w_h \in \mathbf{P}_k(\mathcal{T}_h), \label{eq:DG_scheme_w} \\
        (\partial_t \rho_h, \eta_h)_{\mathcal{T}_h} - (\u_h, \Grad \eta_h)_{\mathcal{T}_h} + \langle \widehat{ \u_h\cdot \nvec}, \eta_h\rangle_{\partial \mathcal{T}_h} &= - c_h(\rho_h; s_h, \eta_h) \quad &&\forall \eta_h\in P_k(\mathcal{T}_h), \label{eq:DG_scheme_eta}\\
        (s_h, r_h)_{\mathcal{T}_h} - \left( \frac{ \partial \E}{\partial \rho_h}, r_h\right)_{\mathcal{T}_h} &= 0 \quad&&\forall r_h \in {P}_k(\mathcal{T}_h), \label{eq:DG_scheme_r}\\ 
        (\u_h, \z_h)_{\mathcal{T}_h} - \left(\frac{ \partial \E}{\partial \v_h}, \z_h \right)_{\mathcal{T}_h} &= 0 \quad &&\forall \z_h \in \mathbf{P}_k(\mathcal{T}_h). \label{eq:DG_scheme_z}
    \end{alignat}
\end{subequations} 
The numerical fluxes are chosen on the boundary of each element $K$ as follows:
\begin{equation}\label{eq:fluxes}
    \widehat{\u_h \cdot \nvec}\coloneqq \av{\u_h }\cdot \nvec + \beta_h \widetilde{\mathbf{H}(\rho_h)}\jump{s_h}, \qquad \widehat{s}_h \coloneqq \av{s_h} + \beta_h\widetilde{H(\v_h)}\jump{\u_h}\cdot{\nvec},
\end{equation}
with $\beta_h$ being a nonnegative function, single-valued on each facet, possibly depending on $\rho_h$ and $\v_h$, with the harmonic averages
\begin{align*}
    \widetilde{\mathbf{H}(\rho_h)}_{|e}& \coloneqq 2\,\frac{ \big(\nvec_{\partial K}^T \mathbf{H}({\rho_h}_{|\partial K}) \nvec_{\partial K}\big)\big( \nvec_{\partial K_e}^T \mathbf{H}({\rho_h}_{|\partial K_e} )\nvec_{\partial K_e}\big)
    }
    {\nvec_{\partial K}^T \mathbf{H}({\rho_h}_{|\partial K}) \nvec_{\partial K} + \nvec_{\partial K_e}^T \mathbf{H}({\rho_h}_{|\partial K_e}) \nvec_{\partial K_e}},\\
    \widetilde{H(\v_h)}_{|e} &\coloneqq 2\,\frac{ H({\v_h}_{|\partial K})H({\v_h}_{|\partial K_e})}{  H({\v_h}_{|\partial K}) + H({\v_h}_{|\partial K_e})},
\end{align*}
which 
are set to zero if both of their arguments vanish.
The flux $\widehat{s}_h$ is single-valued on each facet, while $\widehat{\u_h \cdot \nvec}$ changes sign when the facet orientation is reversed.
For periodic boundary conditions every facet is an interior facet, whereas for the wall boundary condition we set $\widehat{\u_h\cdot\nvec}\coloneqq 0$ on the facets $e\subset\partial\Omega$, while $\widehat{s}_h = s_h$ holds there by the definition of average and jump on boundary facets. In this way, the wall boundary condition is imposed weakly through the numerical flux.
The average terms ensure consistency while the jump terms provide stabilization, with multiplicative coefficients related to the viscous regularizations.
\begin{remark}[On the choice of the viscous regularization]
Since $\alpha_h$ vanishes on element boundaries, the form $c_h(\rho_h; s_h,\eta_h)$ provides a consistent discretization of
$-\Div\bigl(\alpha_h \varepsilon \mathbf{H}(\rho_h)\Grad s_h\bigr)$.
An alternative way to retain consistency with
$-\Div\bigl(\varepsilon \mathbf{H}(\rho_h)\Grad s_h\bigr)
$ would be to omit $\alpha_h$ and introduce suitable interface contributions, leading for instance to an interior-penalty discretization \cite{ArnoldIP}. Such an approach would still preserve energy stability and involutions. However, it presents several disadvantages:
\begin{itemize}
\item Since $\varepsilon$ and $\mathbf{H}(\rho_h)$ may be discontinuous and exhibit large jumps across element interfaces, appropriate interface averages must be introduced, making the formulation more intricate.
\item The method requires an additional stabilization parameter $\omega$, associated with the penalty term
$\frac{\omega}{h}\langle \jump{s_h}, \jump{\eta_h}\rangle_{\partial\mathcal T_h}$.
\item From an implementation viewpoint, hybridization and static condensation are natural tools for improving efficiency. While the scheme in~\eqref{eq:DG_scheme} requires only a facet unknown representing $\av{s_h}$, an interior-penalty formulation would additionally require facet unknowns associated with $\av{\Grad s_h}$.
\end{itemize}
Analogous considerations apply to the bilinear form $b_h$.
\hfill $\blacksquare$
\end{remark}
\subsection{Properties}
\label{sec:properties}
We start by showing that our discretization is conservative. 
\begin{lemma}[Local conservation]
\label{lem:conservativity}
Let $(\v_h,\u_h, \rho_h,s_h)$ be a solution of \eqref{eq:DG_scheme}. Then on each element $K \in \mathcal{T}_h$, the quantities $\v_h\cdot\e_i$ and $\rho_h$ satisfy local conservation laws. More precisely,
        \begin{align*}
            \int_K \partial_t (\v_h\cdot \e_i)\dx + \int_{\partial K}(\widehat{s}_h \e_i)\cdot \nvec\dS &= 0 \qquad \forall i = 1, \dots, d\\
            \int_K\partial_t \rho_h\dx + \int_{\partial K}\widehat{\u_h\cdot \nvec}\dS &= 0.
        \end{align*}
Here $\{\e_1,\ldots,\e_d\}$ denotes the canonical orthonormal basis of $\mathbb{R}^d$.\end{lemma}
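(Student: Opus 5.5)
The plan is to test the semidiscrete equations \eqref{eq:DG_scheme_w} and \eqref{eq:DG_scheme_eta} with elementwise constant test functions supported on a single element. Fix $K\in\mathcal{T}_h$ and let $\charfun_K$ denote its characteristic function. Since $\charfun_K$ is piecewise constant and $k\ge 0$, we have $\charfun_K\e_i\in\mathbf{P}_k(\mathcal{T}_h)$ for every $i=1,\dots,d$, and $\charfun_K\in P_k(\mathcal{T}_h)$. These are therefore admissible choices of $\w_h$ and $\eta_h$, respectively.

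First, we insert $\w_h=\charfun_K\e_i$ into \eqref{eq:DG_scheme_w}. The time-derivative term becomes $\int_K\partial_t(\v_h\cdot\e_i)\dx$. The volume term $(s_h,\Div\w_h)_{\mathcal{T}_h}$ vanishes, because $\w_h$ is constant on each element and hence its elementwise divergence is zero. The boundary term $\langle\widehat{s}_h,\w_h\cdot\nvec\rangle_{\partial\mathcal{T}_h}$ reduces to $\int_{\partial K}\widehat{s}_h\,\e_i\cdot\nvec\dS$, since $\w_h$ vanishes on all other elements. On the right-hand side, $b_h(\v_h;\u_h,\w_h)$ contains the factor $\Div\w_h$, which vanishes elementwise, so $b_h=0$. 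This yields the first identity.

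Second, we insert $\eta_h=\charfun_K$ into \eqref{eq:DG_scheme_eta}. The volume term $(\u_h,\Grad\eta_h)_{\mathcal{T}_h}$ vanishes, and so does $c_h(\rho_h;s_h,\eta_h)$, because the broken gradient of $\eta_h$ is zero. The boundary term reduces to $\int_{\partial K}\widehat{\u_h\cdot\nvec}\dS$, which gives the second identity. On wall facets this boundary term is simply zero by the definition of the flux there, which is still consistent with the statement.

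No genuine obstacle arises. The only points requiring care are, first, that the broken operators in $b_h$ and $c_h$ and in the volume terms act elementwise, so that they annihilate piecewise constants even though $\charfun_K$ is discontinuous. Second, the integrals are interpreted as exact, as assumed at the start of Section~\ref{sec:discrx}. Finally, the numerical fluxes are what make these identities genuine conservation laws: $\widehat{s}_h$ is single-valued on each facet and $\widehat{\u_h\cdot\nvec}$ changes sign with the facet orientation, so the flux of $\v_h\cdot\e_i$ and of $\rho_h$ leaving $K$ is exactly the flux entering the neighbouring element $K_e$.
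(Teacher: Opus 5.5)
Your proposal is correct and follows the paper's own proof: you test \eqref{eq:DG_scheme_w} with $\charfun_K\e_i$ and \eqref{eq:DG_scheme_eta} with $\charfun_K$, so the volume terms and the forms $b_h$, $c_h$ vanish because the broken divergence and gradient of a piecewise constant are zero. Your extra remarks on wall facets and on the single-valuedness of the fluxes are accurate but not needed for the two identities.
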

\begin{proof}
The result follows by choosing the test functions $\w_h=\charfun_K\e_i$ and $\eta_h=\charfun_K$ in \eqref{eq:DG_scheme_w} and \eqref{eq:DG_scheme_eta}, respectively, where $\charfun_K$ denotes the characteristic function of the element $K$. Since these functions are constant on $K$ and vanish elsewhere, all volume terms involving spatial derivatives disappear, leaving precisely the stated conservation laws.
\end{proof}
\begin{theorem}[Local energy conservation/dissipation]\label{thm:energy_loc} 
Let $(\v_h,\u_h, \rho_h,s_h)$ be a solution of \eqref{eq:DG_scheme}. Then on each element $K \in \mathcal{T}_h$, the following energy balance holds:
    \begin{equation}
        \partial_t \int_K\E\dx + \int_{\partial K}  (\widehat{\F}_h + \widehat{\G}_h )\cdot \nvec \dS = -\Pi_{K, h} \leq 0,
        \label{eq:energy_balance_loc}
    \end{equation}
    where 
    \begin{align}
        \widehat{\F}_h &= \av{\u_h ,s_h}_{\times}, \label{eq:Fhat} \\
        \widehat{\G}_h &= \beta_h\widetilde{\mathbf{H}(\rho_h)}\jump{s_h} \av{s_h} \nvec + \beta_h\widetilde{H(\v_h)}\left( \jump{\u_h }\cdot \nvec\right) \av{\u_h}, \label{eq:Ghat} \\
        \Pi_{K,h} &= (\alpha_h \varepsilon H(\v_h)\Div \u_h, \Div \u_h)_K + (\alpha_h \varepsilon \mathbf{H}(\rho_h)\Grad s_h, \Grad s_h)_K \nonumber \\
        &\quad + \frac{1}{2}\int_{\partial K}\beta_h\Big(\widetilde{\mathbf{H}(\rho_h)}\jump{s_h}^2 + \widetilde{H(\v_h)}\big(\jump{\u_h}\cdot \nvec\big)^2\Big)\dS. \label{eq:Pi_loc}
    \end{align}
Here $\widehat{\F}_h+\widehat{\G}_h$ is the consistent and single-valued numerical energy/entropy flux, composed of the energy-conservative part $\widehat{\F}_h$ and the dissipative part $\widehat{\G}_h$, and $\Pi_{K,h}\geq 0$ is the local energy/entropy dissipation on $K$, consisting of the viscous contribution in the interior of $K$ and of the contribution of the jump terms on $\partial K$. On wall facets, we set $\widehat{\F}_h\cdot\nvec\coloneqq 0$.
In particular, when $\varepsilon = \beta_h = 0$, the local energy conservation holds:
\begin{equation*}
    \partial_t \int_K \E \dx +  \int_{\partial K} \widehat{\F}_h \cdot \nvec \dS = 0.
\end{equation*}    
\end{theorem}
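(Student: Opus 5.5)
The plan is to test the momentum equation \eqref{eq:DG_scheme_w} with $\w_h=\charfun_K\u_h$ and the mass equation \eqref{eq:DG_scheme_eta} with $\eta_h=\charfun_K s_h$; both are admissible since $\u_h,s_h$ lie in the discrete spaces. Since $\partial_t\v_h$ and $\partial_t\rho_h$ belong to $\mathbf{P}_k(\mathcal{T}_h)$ and $P_k(\mathcal{T}_h)$ elementwise, the projection relations \eqref{eq:DG_scheme_z} and \eqref{eq:DG_scheme_r}, localized to $K$ with test functions $\charfun_K\partial_t\v_h$ and $\charfun_K\partial_t\rho_h$, give
\[
(\partial_t\v_h,\u_h)_K+(\partial_t\rho_h,s_h)_K=\Bigl(\frac{\partial\E}{\partial\v_h},\partial_t\v_h\Bigr)_K+\Bigl(\frac{\partial\E}{\partial\rho_h},\partial_t\rho_h\Bigr)_K=\partial_t\int_K\E(\v_h,\rho_h)\dx
\]
by the chain rule applied pointwise to the exact (nonprojected) energy. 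This step is the reason the dual variables are defined as $L^2$ projections.

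The volume flux terms combine to $-(s_h,\Div\u_h)_K-(\u_h,\Grad s_h)_K=-\langle s_h,\u_h\cdot\nvec\rangle_{\partial K}$ by integration by parts on $K$. The viscous terms give $b_h(\v_h;\u_h,\charfun_K\u_h)+c_h(\rho_h;s_h,\charfun_K s_h)$, which are exactly the two volume integrals in $\Pi_{K,h}$ and are nonnegative because $\alpha_h,\varepsilon\ge0$, $H\ge0$ and $\mathbf{H}$ is positive semidefinite. No boundary terms arise from these forms, since the integrals are elementwise.

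What remains is the boundary integrand $-s_h\u_h\cdot\nvec+\widehat{s}_h\u_h\cdot\nvec+\widehat{\u_h\cdot\nvec}\,s_h$, with traces taken from $K$. This is the main algebraic step. On an interior facet I will write $\u_h|_K=\av{\u_h}+\frac12\jump{\u_h}$ and $s_h|_K=\av{s_h}+\frac12\jump{s_h}$, with jumps oriented from $K$ so that they are paired with the outward normal $\nvec$.

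Consider first the central parts. The terms $\av{s_h}\u_h\cdot\nvec+\av{\u_h}\cdot\nvec\,s_h-s_h\u_h\cdot\nvec$ expand to $\av{s_h}\av{\u_h}\cdot\nvec-\frac14\jump{s_h}\jump{\u_h}\cdot\nvec=\av{\u_h,s_h}_\times\cdot\nvec$. This quantity is single-valued, because both jumps change sign when the element is swapped.

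Now the stabilization parts. The term $\beta_h\widetilde{H}(\jump{\u_h}\cdot\nvec)\,\u_h\cdot\nvec$ equals $\beta_h\widetilde{H}(\jump{\u_h}\cdot\nvec)\av{\u_h}\cdot\nvec+\frac12\beta_h\widetilde{H}(\jump{\u_h}\cdot\nvec)^2$. Likewise, $\beta_h\widetilde{\mathbf{H}}\jump{s_h}s_h$ splits into $\beta_h\widetilde{\mathbf{H}}\jump{s_h}\av{s_h}$ plus $\frac12\beta_h\widetilde{\mathbf{H}}\jump{s_h}^2$. The first pieces are $\widehat{\G}_h\cdot\nvec$; note that $\jump{s_h}\nvec$ and $(\jump{\u_h}\cdot\nvec)$ are invariant under reversing orientation, so $\widehat{\G}_h$ is single-valued. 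The squared pieces move to the right-hand side as the facet part of $\Pi_{K,h}$, and they are nonnegative since $\beta_h\ge0$ and the harmonic averages are nonnegative. The harmonic averages are symmetric in $K,K_e$; I must check that the normal-dependent quadratic form $\nvec^T\mathbf{H}\nvec$ is unchanged under $\nvec\to-\nvec$, which it is.

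On wall facets, $\widehat{\u_h\cdot\nvec}=0$ and $\widehat{s}_h=s_h$, so the integrand vanishes, consistent with setting $\widehat{\F}_h\cdot\nvec=0$ and with $\widehat{\G}_h=0$ there (the jumps vanish). Consistency holds because both jumps vanish for smooth $(\u,s)$, which leaves $s\u$. Setting $\varepsilon=\beta_h=0$ kills $\widehat{\G}_h$ and $\Pi_{K,h}$, which yields local energy conservation. The only delicate points I anticipate are the sign and orientation bookkeeping in the jump expansion and justifying the time-derivative identity, which needs $\partial_t\v_h$ and $\partial_t\rho_h$ to lie in the discrete spaces.
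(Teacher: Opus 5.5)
Your proposal is correct and is essentially the paper's own proof. It uses the same test functions $\charfun_K\u_h$, $\charfun_K s_h$, $\charfun_K\partial_t\v_h$, $\charfun_K\partial_t\rho_h$, the same chain-rule identity made available by the $L^2$ projections, integration by parts on $K$, the same average/jump splitting of the traces producing $\av{\u_h,s_h}_\times\cdot\nvec$, $\widehat{\G}_h\cdot\nvec$ and the squared-jump part of $\Pi_{K,h}$, and the same treatment of wall facets. One minor wording point: it is the vector $\av{\u_h,s_h}_\times$ that is single-valued, while its normal component flips sign with $\nvec$, which is exactly what the global sum over elements requires.
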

\begin{proof}
Let $\charfun_K$ be the characteristic function of the element $K$, and take $\w_h = \charfun_K\u_h$, $\eta_h = \charfun_K s_h$, $r_h = \charfun_K\partial_t \rho_h$ and $\z_h = \charfun_K\partial_t \v_h$ in \eqref{eq:DG_scheme}. Then, taking the sum of \eqref{eq:DG_scheme_r} and \eqref{eq:DG_scheme_z} and using the chain rule, we obtain 
\begin{equation}
    (s_h, \partial_t \rho_h)_K + (\u_h, \partial_t \v_h)_K = \left(\frac{ \partial \E}{\partial \rho_h}, \partial_t \rho_h \right)_K + \left( \frac{ \partial \E}{\partial \v_h}, \partial_t \v_h \right)_K = \partial_t\int_K \E \dx. \label{eq:chain_rule_disc}
\end{equation}
Next, we take the sum of \eqref{eq:DG_scheme_w} and \eqref{eq:DG_scheme_eta} and use \eqref{eq:chain_rule_disc} to obtain 
\begin{align}
  &- (\alpha_h\varepsilon H(\v_h) \Div \u_h, \Div \u_h)_K - (\alpha_h\varepsilon\mathbf{H}(\rho_h)\nabla s_h, \nabla s_h)_K \nonumber \\
  &\quad = (\partial_t \v_h, \u_h)_K + (\partial_t \rho_h, s_h)_K - (s_h,\Div \u_h)_K - (\u_h, \Grad s_h)_K \nonumber \\
  &\qquad\quad + \langle \widehat{s}_h, \u_h \cdot \nvec\rangle_{\partial K} + \langle \widehat{\u_h \cdot \nvec}, s_h \rangle_{\partial K} \nonumber 
  \\
  &\quad = \partial_t\int_K \E\dx -\langle \u_h\cdot \nvec, s_h\rangle_{\partial K}+ \langle \widehat{s}_h, \u_h \cdot \nvec\rangle_{\partial K} + \langle \widehat{\u_h \cdot \nvec}, s_h \rangle_{\partial K} \label{eq:energy_proof_b} \\
  &\quad = \partial_t \int_K \E\dx -\langle \u_h\cdot \nvec, s_h\rangle_{\partial K} + \langle \av{s_h} , \u_h \cdot \nvec\rangle_{\partial K} \nonumber \\
  &\qquad\quad +\langle\beta_h\widetilde{H(\v_h)}\jump{\u_h}\cdot{\nvec}, \u_h \cdot \nvec \rangle_{\partial K} + \langle\av{\u_h }\cdot \nvec, s_h \rangle_{\partial K} \nonumber\\
  &\qquad\quad + \langle \beta_h \widetilde{\mathbf{H}(\rho_h)}\jump{s_h}, s_h \rangle_{\partial K}. \nonumber 
\end{align}
On wall facets, the three boundary terms in step~\eqref{eq:energy_proof_b} cancel, since $\widehat{\u_h\cdot\nvec}=0$ and $\widehat{s}_h=s_h$ there, consistently with $\widehat{\F}_h\cdot\nvec=0$ and with the vanishing jumps on boundary facets; hence, it suffices to consider the interior facets of $\partial K$ in the remaining steps.
 Then the proof is concluded using the identity 
    \begin{equation*}
    \begin{split}
        -\langle \u_h\cdot \nvec, s_h\rangle_{\partial K} + \langle \av{s_h} , \u_h \cdot \nvec\rangle_{\partial K}  + \langle\av{\u_h }\cdot \nvec, s_h \rangle_{\partial K} &= \int_{\partial K}\av{\u_h, s_h}_{\times} \cdot \nvec \dS \\
        &= \int_{\partial K}\widehat{\F}_h \cdot \nvec \dS
    \end{split}
    \end{equation*}
and, for the jump terms, the splitting of the traces from inside $K$ into average and jump, $s_h=\av{s_h}+\frac{1}{2}\jump{s_h}$ and $\u_h\cdot\nvec=\av{\u_h}\cdot\nvec+\frac{1}{2}\jump{\u_h}\cdot\nvec$ on $\partial K$, which gives
\begin{equation*}
\begin{aligned}
    &\langle\beta_h\widetilde{H(\v_h)}\jump{\u_h}\cdot{\nvec}, \u_h \cdot \nvec \rangle_{\partial K} + \langle \beta_h \widetilde{\mathbf{H}(\rho_h)}\jump{s_h}, s_h \rangle_{\partial K} \\
    &\qquad = \int_{\partial K}\widehat{\G}_h\cdot\nvec\dS + \frac{1}{2}\int_{\partial K}\beta_h\Big(\widetilde{\mathbf{H}(\rho_h)}\jump{s_h}^2 + \widetilde{H(\v_h)}\big(\jump{\u_h}\cdot \nvec\big)^2\Big)\dS,
\end{aligned}
\end{equation*}
together with the definition \eqref{eq:Pi_loc} of $\Pi_{K,h}$.
Energy conservation when $\varepsilon = \beta_h = 0$ follows trivially.
\end{proof}
\begin{corollary}[Global energy conservation/dissipation] 
\label{cor:energy_glob}
Let $(\v_h,\u_h, \rho_h,s_h)$ be a solution of \eqref{eq:DG_scheme}. Then the total energy satisfies the balance
\begin{equation*}
    \partial_t \int_{\Omega} \E\dx = -\sum_{K\in\mathcal{T}_h} \Pi_{K, h}\leq 0, 
\end{equation*}
and the dissipation vanishes when $\varepsilon = \beta_h = 0$.
\end{corollary}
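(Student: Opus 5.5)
The plan is to sum the local balance \eqref{eq:energy_balance_loc} of Theorem~\ref{thm:energy_loc} over all elements $K\in\mathcal{T}_h$. The time-derivative terms add up to $\partial_t\int_\Omega \E\dx$, and the right-hand sides add up to $-\sum_K \Pi_{K,h}$. So the whole argument reduces to showing that the boundary flux contributions $\sum_K\int_{\partial K}(\widehat{\F}_h+\widehat{\G}_h)\cdot\nvec\dS$ cancel.

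I will rewrite that sum facet by facet, splitting facets into interior (or periodic) facets and wall facets.

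On an interior facet $e$ shared by $K$ and $K_e$, I first check that the vector fields $\widehat{\F}_h$ and $\widehat{\G}_h$ are single-valued, i.e., independent of which side is taken as ``inside''. The average $\av{\cdot}$ is symmetric. The jump $\jump{\cdot}$ changes sign when the roles of $K$ and $K_e$ are swapped. Hence:
\begin{itemize}
\item The product $\jump{\u_h}\jump{s_h}$ in the cross average is symmetric, so $\widehat{\F}_h=\av{\u_h,s_h}_{\times}$ is single-valued.
\item In $\widehat{\G}_h$, each term carries exactly two sign-flipping factors. These are $\jump{s_h}$ and $\nvec$ in the first term, and $\jump{\u_h}$ and $\nvec$ in the second. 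Their sign changes cancel.
\item The harmonic averages $\widetilde{\mathbf{H}(\rho_h)}$ and $\widetilde{H(\v_h)}$ are symmetric in their two arguments. They are also invariant under $\nvec\mapsto-\nvec$, since they enter only through quadratic forms $\nvec^T\mathbf{H}\nvec$.
\item $\beta_h$ is single-valued on each facet.
\end{itemize}
Since the outward normals from the two sides satisfy $\nvec_{\partial K_e}=-\nvec_{\partial K}$, the two contributions of each interior facet cancel exactly.

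On wall facets, $\widehat{\F}_h\cdot\nvec=0$ by the convention in Theorem~\ref{thm:energy_loc}. Also $\widehat{\G}_h=0$ there, because jumps vanish on boundary facets. So wall facets contribute nothing.

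Summing, the boundary terms disappear, which gives the stated global balance. Nonnegativity of $\sum_K\Pi_{K,h}$ follows from $\Pi_{K,h}\ge0$ in Theorem~\ref{thm:energy_loc}. For completeness, this rests on $\alpha_h\ge0$, $\varepsilon\ge0$, $\beta_h\ge0$, and the positive (semi-)definiteness of $H$, $\mathbf{H}$ and their harmonic averages. When $\varepsilon=\beta_h=0$, every term in \eqref{eq:Pi_loc} carries a factor $\varepsilon$ or $\beta_h$, so the dissipation vanishes.

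The only delicate step is the single-valuedness check on interior facets, in particular the sign bookkeeping for $\widehat{\G}_h$ and the invariance of the harmonic averages under reversal of orientation. Everything else is a direct summation of the already proven local identity.
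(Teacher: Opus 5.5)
Your proof is correct and matches the paper's argument: sum the local balance \eqref{eq:energy_balance_loc} over all elements, let the facet terms cancel because $\widehat{\F}_h+\widehat{\G}_h$ is single-valued on interior facets while the outward normals there are opposite, and note that wall facets contribute nothing. Your explicit sign bookkeeping for $\widehat{\G}_h$ and your check that the harmonic averages are unchanged when the orientation is reversed simply spell out the single-valuedness that the paper takes from Theorem~\ref{thm:energy_loc}.
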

\begin{proof}
We take the sum of \eqref{eq:energy_balance_loc} over $K\in\mathcal{T}_h$ and use the fact that $(\widehat{\F}_h+\widehat{\G}_h)\cdot \nvec$ is single-valued across interior facets and vanishes on wall facets (for periodic boundary conditions, all facets are interior facets).
\end{proof}
\begin{remark}[Geometric interpretation of the discrete method]
   Actually, system~\eqref{eq:DG_scheme} can be interpreted as a discrete Hamiltonian system with gradient-flow dissipation in the sense of \cite{LombardiPagliantini25}. More precisely, the terms in \eqref{eq:DG_scheme_w} and \eqref{eq:DG_scheme_eta} that do not involve time derivatives can be decomposed into a skew-symmetric contribution, corresponding to the reversible dynamics, and a negative semi-definite contribution, corresponding to the irreversible dissipative dynamics. Since this structure is not needed in the subsequent analysis, we omit the details.
   \hfill $\blacksquare$
\end{remark}
We show now that the proposed DG method conserves $\Curl \v_h$ in a weak sense.
\begin{lemma}[Involution property]
\label{lem:involution}
Let $(\v_h,\u_h, \rho_h,s_h)$ be a solution of \eqref{eq:DG_scheme}. Then $\v_h$ satisfies
\begin{equation*}
    (\partial_t \v_h, \Curl \q)_{\mathcal{T}_h} = 0 \qquad \forall \q \in \mathbf{Q}_h \coloneqq \{ \q \in \mathbf{H}_0(\operatorname{curl}, \Omega) \mid \Curl \q \in \mathbf{P}_{k}(\mathcal{T}_h)\}.
\end{equation*}
\end{lemma}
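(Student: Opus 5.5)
The plan is to test \eqref{eq:DG_scheme_w} with $\w_h = \Curl \q$ and show that every term except the time derivative vanishes. This test function is admissible because $\Curl\q \in \mathbf{P}_k(\mathcal{T}_h)$ by the definition of $\mathbf{Q}_h$. It also has two properties we will use: its divergence vanishes elementwise, and its normal component is continuous across interior facets and vanishes on $\partial\Omega$.

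The first property comes from $\q\in\mathbf{H}_0(\operatorname{curl},\Omega)$. This gives $\Curl\q \in \mathbf{H}(\operatorname{div},\Omega)$ with $\Div\Curl\q = 0$ in the distributional sense on $\Omega$. Since $\Curl\q$ is piecewise polynomial, on each element $K$ we get $\Div(\Curl\q)_{|K}=0$ pointwise. Consequently the volume term $(s_h,\Div \w_h)_{\mathcal{T}_h}$ vanishes. The regularization term $b_h(\v_h;\u_h,\w_h) = (\alpha_h\varepsilon H(\v_h)\Div\u_h, \Div \w_h)_{\mathcal{T}_h}$ vanishes as well, whatever the values of $\alpha_h$, $\varepsilon$ and $H$. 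This is exactly where the grad--div form of the regularization, rather than a full vector Laplacian, is essential.

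The second property concerns the boundary term $\langle \widehat{s}_h, \w_h\cdot\nvec\rangle_{\partial\mathcal{T}_h}$. A piecewise polynomial field lies in $\mathbf{H}(\operatorname{div},\Omega)$ if and only if its normal traces agree across interior facets. Hence on each interior facet $e$ shared by $K$ and $K_e$, the contributions $(\Curl\q)_{|K}\cdot\nvec_{\partial K}$ and $(\Curl\q)_{|K_e}\cdot\nvec_{\partial K_e}$ are equal and opposite. The flux $\widehat{s}_h$ is single-valued on each facet: $\av{s_h}$ is symmetric, and $\jump{\u_h}\cdot\nvec$ is invariant under swapping $K$ and $K_e$ because both the jump and the normal flip sign, while $\widetilde{H(\v_h)}$ and $\beta_h$ are symmetric. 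Therefore the two sides of every interior facet cancel. In the periodic case every facet is interior, so all boundary contributions cancel.

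The main obstacle is the boundary facets under wall conditions. There $\widehat{s}_h = s_h$ is not zero, so we need $\Curl\q\cdot\nvec = 0$ on $\partial\Omega$. This follows from the tangential trace condition $\q\times\nvec=0$, since $\Curl\q\cdot\nvec$ is the surface curl of the tangential trace. Rigorously, for any $\phi\in H^1(\Omega)$ we have $(\Curl\q,\Grad\phi)_\Omega = 0$ by integration by parts using $\q\times\nvec = 0$. Combined with $\Div\Curl\q=0$, this shows that the normal trace vanishes in $H^{-1/2}(\partial\Omega)$, and hence pointwise on each boundary facet because it is piecewise polynomial. For periodic meshes one interprets $\mathbf{H}_0(\operatorname{curl})$ as the periodic space, and the interior-facet argument applies. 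Collecting the three vanishing terms leaves $(\partial_t\v_h,\Curl\q)_{\mathcal{T}_h}=0$.
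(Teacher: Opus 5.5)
Your proposal is correct and follows essentially the same route as the paper. Both test \eqref{eq:DG_scheme_w} with $\w_h=\Curl\q$, use $\Div\Curl\q=0$ to eliminate the volume term and $b_h$, and combine the single-valuedness of $\widehat{s}_h$ with the normal continuity of $\Curl\q$ to eliminate the interface term. Your explicit check that $\Curl\q\cdot\nvec=0$ on $\partial\Omega$, via the tangential trace condition, spells out what the paper compresses into the claim $\Curl\q\in\mathbf{H}_0(\operatorname{div},\Omega)$ when it rewrites the facet sum as $\tfrac12\langle\widehat{s}_h,\jump{\Curl\q}\cdot\nvec\rangle_{\partial\mathcal{T}_h}$.
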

\begin{proof}
Let $\q \in \mathbf{Q}_h$. We choose $\w_h=\Curl \q$ in \eqref{eq:DG_scheme_w}. This yields
\begin{equation*}
          (\partial_t \v_h, \Curl \q)_{\mathcal{T}_h} - (s_h, \Div \Curl \q)_{\mathcal{T}_h} + \langle \widehat{s}_h, (\Curl \q) \cdot \nvec\rangle_{\partial \mathcal{T}_h} = - b_h(\v_h;\u_h, \Curl \q).
\end{equation*}
Since $\Div\Curl=0$, we immediately obtain
\begin{equation*}
    (s_h, \Div \Curl \q)_{\mathcal{T}_h} = b_h(\v_h; \u_h, \Curl \q) = 0.
\end{equation*}
It remains to consider the interface  
term. Since $\widehat{s}_h$ is single-valued on mesh interfaces, the definition of the jump gives
\begin{equation*}
        \langle \widehat{s}_h, (\Curl \q) \cdot \nvec \rangle_{\partial \mathcal{T}_h} = \frac{1}{2}\langle \widehat{s}_h, \jump{ \Curl \q}\cdot \nvec\rangle_{ \partial \mathcal{T}_h}.
\end{equation*}
Since $\Curl \q \in \mathbf{H}_0(\operatorname{div}, \Omega) \cap\mathbf{P}_k(\mathcal{T}_h)$, it is continuous in the normal direction across mesh interfaces, and this last term vanishes as well.  
\end{proof}
\begin{remark}[Involution preservation in DG methods]
   To the best of the authors' knowledge, the fact that DG discretizations preserve involutions on simplicial meshes was discovered by Fezoui, Lanteri, Lohrengel and  Piperno \cite{DGMaxwell_PipernoEtAl} in the context of the linear Maxwell equations. The case of quadrilateral meshes is more involved and has been discussed by Perrier \cite{Perrier2,Perrier_contraintDG}. For the grad--div and curl--curl operators in first-order form, Ern and Guermond~\cite{ErnGuermond2023} have shown that the DG approximation is involution-preserving and spectrally correct, where the discrete involutions are expressed as discrete orthogonality properties. This weak, i.e. variational, preservation of the involution has to be distinguished from the locally divergence-free DG methods of Cockburn, Li and Shu \cite{CockburnLiShu2004}, from the exactly div-curl-grad compatible DG schemes on unstructured simplex meshes of Abgrall, Dumbser and Maire \cite{CompatibleDG1}, and from the curl-preserving finite volume schemes of \cite{Perrier3,HTCLagrangeGPR,FourSplitUS}, which satisfy the involutions exactly in a strong sense.
   \hfill $\blacksquare$
\end{remark}

\subsection{Variants}
We discuss an alternative stabilization strategy (Section~\ref{sec:variant_stab}) and, in the case~$\mathbf{u}=\mathbf{v}$, an alternative discretization in space (Section~\ref{sec:variant_space}).
\subsubsection{A different stabilization strategy}\label{sec:variant_stab}
Following the approach of Lu, Liu, and Shu \cite{OE_conservationlaws, OE_systems}, the bilinear forms $b_h(\cdot;\cdot,\cdot)$ and $c_h(\cdot;\cdot,\cdot)$ may be replaced by
\begin{align*}
\widetilde{b}_h(\v_h;\u_h,\w_h)
&\coloneqq
\sum_{K\in\mathcal{T}_h}\sum_{\ell=0}^{k}
\frac{\sigma_K^\ell}{h_K}
\bigl(\u_h-\pi_h^{\ell-1}\u_h,\w_h\bigr)_K,\\
\widetilde{c}_h(\rho_h;s_h,\eta_h)
&\coloneqq
\sum_{K\in\mathcal{T}_h}\sum_{\ell=0}^{k}
\frac{\sigma_K^\ell}{h_K}
\bigl(s_h-\pi_h^{\ell-1}s_h,\eta_h\bigr)_K,
\end{align*}
respectively. Here $\sigma_K^\ell\geq0$ are stabilization coefficients, possibly depending on the discrete solution, and~$\pi_h^\ell$ denotes the~$L^2$-orthogonal projection onto the space of discontinuous polynomials of degree at most~$\ell$. We adopt the convention $\pi_h^{-1}=\pi_h^0$.

With these substitutions, Lemma~\ref{lem:conservativity}, Theorem~\ref{thm:energy_loc} and Corollary~\ref{cor:energy_glob} remain valid, \emph{mutatis mutandis}, where the viscous contribution to $\Pi_{K,h}$ in~\eqref{eq:Pi_loc} is replaced by the damping contribution to the local energy/entropy dissipation
\begin{equation}
\widetilde{\Pi}_{K,h} \coloneqq \sum_{\ell=0}^{k}\frac{\sigma_K^\ell}{h_K}\Big(\big\lVert \u_h-\pi_h^{\ell-1}\u_h\big\rVert_K^2+\big\lVert s_h-\pi_h^{\ell-1}s_h\big\rVert_K^2\Big)\geq 0,
\label{eq:Pi_damping}
\end{equation}
which follows from $\widetilde{b}_h(\v_h;\u_h,\charfun_K\u_h)+\widetilde{c}_h(\rho_h;s_h,\charfun_K s_h)=\widetilde{\Pi}_{K,h}$, since $\u_h-\pi_h^{\ell-1}\u_h$ and $s_h-\pi_h^{\ell-1}s_h$ are $L^2(K)$-orthogonal to $\pi_h^{\ell-1}\u_h$ and $\pi_h^{\ell-1}s_h$, respectively. However, unlike the stabilization proposed in this work, the operators $\widetilde{b}_h$ and $\widetilde{c}_h$ do not preserve the involution property proved in Lemma~\ref{lem:involution}. Consequently, this alternative stabilization is recommended only in one space dimension, where the curl involution is trivially satisfied, or in applications in which the involution is not relevant.
\begin{remark}[Damping mechanism and entropy stability]
In \cite{OE_entropystable}, the authors modify 
the original damping mechanism proposed in \cite{OE_conservationlaws, OE_systems} 
in order to obtain an entropy-stable scheme: the contributions associated with the higher-order projections $\pi_h^\ell$, $\ell\ge1$, are not retained, as they are not compatible with the discrete entropy inequality.  
In contrast, in the present formulation the damping acts on the dual variables $\u_h$ and $s_h$, which are also the test functions of the energy balance, so that every projection level contributes a nonnegative term to the local energy/entropy dissipation $\widetilde{\Pi}_{K,h}$ in~\eqref{eq:Pi_damping}; hence, the scheme remains entropy stable while retaining all projection levels, thus preserving the full structure of the original damping mechanism.
\hfill $\blacksquare$
\end{remark}

\subsubsection{A variant for the case
\texorpdfstring{$\mathbf{u}=\mathbf{v}$}{u = v}}\label{sec:variant_space}
In this subsection we assume that the energy $\E$ is a quadratic function of $\v$, that is $\E(\v, \rho) = \frac{1}{2}\lvert \v\rvert^2 + g(\rho)$, and therefore $\u = \v$. This hypothesis allows us to use a different discretization in space. The method reads as follows: Find $\v_h(t)\in \Pdiv:= \mathbf{P}_{k+1}(\mathcal{T}_h) \cap \mathbf{H}_0(\operatorname{div}, \Omega)$, $\rho_h, s_h \in P_k(\mathcal{T}_h)$ such that, for all $\w_h \in \Pdiv$ and $\eta_h, r_h \in P_k(\mathcal{T}_h)$,
\begin{subequations}  \label{eq:BDM}
    \begin{align}
        (\partial_t \v_h, \w_h)_{\Omega} - (s_h, \Div \w_h)_{\Omega} & =-b_h(\v_h; \v_h, \w_h), \label{eq:BDM_w}  \\
        (\partial_t \rho_h, \eta_h)_{\Omega} + (\Div \v_h, \eta_h)_{\Omega} + \langle \beta_h \widetilde{\mathbf{H}(\rho_h)}\jump{s_h}, \eta_h\rangle_{\partial \mathcal{T}_h}  &= -c_h(\rho_h; s_h, \eta_h),\\
        (s_h, r_h)_{\Omega} - \left( \frac{ \partial \E}{\partial \rho_h}, r_h\right)_{\Omega} &= 0.
    \end{align}
\end{subequations}
Here, the space $\mathbf{H}_0(\operatorname{div},\Omega)$ imposes the wall boundary condition $\v_h\cdot\nvec=0$ strongly; for periodic boundary conditions, it is replaced by its periodic counterpart.
\begin{remark}[No bubble function required]
For this variant, the bubble function $\alpha_h$ is not required to ensure consistency of~$b_h$, since the space $\mathbf{P}_{k+1}(\mathcal{T}_h) \cap \mathbf{H}_0(\operatorname{div},\Omega)$ is already $\operatorname{div}$-conforming; hence, in \eqref{eq:BDM} and in~\eqref{eq:BDM_h} below, $b_h$ is defined as in~\eqref{eq:bh}, except that the bubble function is omitted: $b_h(\v_h;\v_h,\w_h)=(\varepsilon H(\v_h)\Div\v_h,\Div\w_h)_{\mathcal{T}_h}$, while $c_h$ is defined as in~\eqref{eq:ch}. 
\hfill $\blacksquare$
\end{remark}
Inspecting the proofs of Lemma~\ref{lem:conservativity} and Theorem~\ref{thm:energy_loc}, one may at first glance expect that it is impossible to establish both local conservation and discrete energy/entropy balance, since neither $\e_i \chi_K$ nor $\v_h \chi_K$ are admissible test functions in \eqref{eq:BDM_w}. To overcome this difficulty, we introduce an equivalent hybridized formulation, following  Arnold and Brezzi \cite{ArnoldBrezzi85}, where~$\widehat{s}_h$ is introduced as an additional discretization variable. Specifically, we seek $\v_h(t)\in \mathbf{P}_{k +1}(\mathcal{T}_h)$, $\rho_h, s_h \in P_k(\mathcal{T}_h)$ and $\widehat{s}_h \in \widehat{M}_{k+1}$ such that, for all $\w_h \in \mathbf{P}_{k+1}(\mathcal{T}_h)$, $\eta_h, r_h \in P_k(\mathcal{T}_h)$ and $\widehat{r}_h \in \widehat{M}_{k+1}$,
\begin{subequations}
\label{eq:BDM_h}
    \begin{align}
        (\partial_t \v_h, \w_h)_{\mathcal{T}_h} - (s_h, \Div \w_h)_{\mathcal{T}_h} + \langle \widehat{s}_h, \w_h \cdot \nvec\rangle_{\partial \mathcal{T}_h} & = -b_h(\v_h; \v_h, \w_h),  \\
        (\partial_t \rho_h, \eta_h)_{\mathcal{T}_h} + (\Div \v_h, \eta_h)_{\mathcal{T}_h} + \langle \beta_h \widetilde{\mathbf{H}(\rho_h)}\jump{s_h}, \eta_h\rangle_{\partial \mathcal{T}_h} &=-c_h(\rho_h; s_h, \eta_h),\\
        (s_h, r_h)_{\mathcal{T}_h} - \left( \frac{ \partial \E}{\partial \rho_h}, r_h\right)_{\mathcal{T}_h} &= 0, \\
        \langle \v_h \cdot \nvec, \widehat{r}_h \rangle_{\partial \mathcal{T}_h} &= 0.
    \end{align}
\end{subequations}
Here $\widehat{M}_{k+1} =\{ \widehat{r}_h \in P_{k+1}(\partial \mathcal{T}_h) \mid \jump{\widehat{r}_h} = 0\}$. Since discontinuous polynomials are admissible test functions in \eqref{eq:BDM_h}, the proofs of local conservation, local and global energy/entropy balance, and involution property carry over \emph{mutatis mutandis} from Section~\ref{sec:properties}. In particular, the numerical flux for $\u_h = \v_h$ is now given by
\begin{equation*}
    \widehat{\u_h\cdot \nvec}
\coloneqq
\v_h\cdot \nvec
+
\beta_h \widetilde{\mathbf{H}(\rho_h)}\jump{s_h},
\end{equation*}
and the local energy/entropy balance \eqref{eq:energy_balance_loc} holds with
\begin{align}
\widehat{\F}_h\cdot\nvec &= \widehat{s}_h\,\v_h\cdot\nvec, \qquad \widehat{\G}_h\cdot\nvec = \beta_h\widetilde{\mathbf{H}(\rho_h)}\jump{s_h}\av{s_h}, \label{eq:BDM_flux}\\
\Pi_{K,h} &= (\varepsilon H(\v_h)\Div\v_h,\Div\v_h)_K + (\alpha_h\varepsilon\mathbf{H}(\rho_h)\Grad s_h,\Grad s_h)_K \nonumber\\
&\quad + \frac{1}{2}\int_{\partial K}\beta_h\widetilde{\mathbf{H}(\rho_h)}\jump{s_h}^2\dS, \nonumber 
\end{align}
where $\widehat{s}_h$ is the hybrid variable; both fluxes in \eqref{eq:BDM_flux} are single-valued and vanish on wall facets.
\begin{remark}[Local properties via hybridization]
Hybridization has proven to be a powerful tool for establishing \emph{local} properties of finite element methods whose shape functions have support extending across multiple elements; see, for example~\cite{BKSternMaxwell, BKSternYangMills, McLachlanSternMS, SternZampa26}.
\hfill $\blacksquare$
\end{remark}

\section{Fully discrete space--time method}\label{sec:discrxt}
To complete the definition of the fully discrete method, we now turn to the time discretization. Observe first that \eqref{eq:DG_scheme} defines a differential--algebraic system of equations (DAE). However, by eliminating the variables $s_h$ and $\u_h$ through \eqref{eq:DG_scheme_r} and \eqref{eq:DG_scheme_z}, respectively, the system can be recast as a system of ordinary differential equations (ODEs), to which standard time-integration techniques may be applied. For example, for the sake of efficiency, one may employ an explicit Runge--Kutta scheme. Indeed, an explicit time integrator requires only the inversion of local mass matrices at each time step and, in particular, avoids the need for nonlinear solvers. By contrast, formulations based solely on energy/entropy variables \cite{HughesFrancaMallet_86I, HiltebrandMishra_spacetimeDG} generally require the solution of local nonlinear problems, if local conservation is to be retained, even when coupled with explicit time-stepping methods. Moreover, Runge--Kutta methods preserve both the conservation properties and the curl involution. The global energy/entropy balance can be restored by the relaxation Runge--Kutta methods \cite{Ketcheson2019,RanochaEtAl2020}, which rescale each time step by a scalar relaxation parameter. A local variant of this approach, which requires the solution of one scalar equation per element, yields fully discrete local entropy inequalities \cite{RanochaDalcinParsani2020}. However, as noted in \cite{ChenShuReview}, whether Runge--Kutta methods without relaxation also preserve \emph{local} energy/entropy balances remains an open question. Fully discrete entropy stability has also been achieved with entropy-stable space--time DG schemes based on summation-by-parts operators \cite{FriedrichEtAl2019} and with high order ADER-DG schemes combined with a relaxation approach \cite{GaburroEtAl2023}.

To ensure that energy/entropy preservation holds at the fully discrete level, we employ a continuous Galerkin--discontinuous Galerkin (CG--DG) Petrov--Galerkin discretization in time with continuous piecewise-polynomial trial functions of degree at most~$m+1$ and discontinuous piecewise-polynomial test functions of degree at most~$m$. 
The resulting scheme is fully implicit: on each space--time slab all elements are coupled through the numerical fluxes, so that a global nonlinear system has to be solved in every time step.

Before introducing the method, we make a slight change in notation. For a tensor-product space--time mesh, we denote the time-step size by $\htime$, the spatial mesh size by~$\hs$, and the space--time mesh size by~$\hst$. Correspondingly, these subscripts will be used to distinguish quantities associated with the temporal, spatial, and space--time discretizations, respectively. 

Let~$\mathcal{T}_{\hs}$ be a mesh of~$\Omega$ as in Section~\ref{sec:meshes}. 
Furthermore, let~$0=t^0<t^1<t^2<\ldots$ define a temporal partition with time steps~$t^{n+1}-t^n\le\htime$.
We define the associated tensor-product space--time mesh by~$\mathcal{T}_{\hst}:=\{K\times (t^n,t^{n+1}):\, K\in \mathcal{T}_{\hs},\ n=0,1,2,\ldots\,\}$.
The fully discrete method reads as follows: At each time step~$n$, $n=0,1,2,\ldots\,$, given $\v_{\hs}^n\in \P_k(\mathcal{T}_{\hs})$ and $\rho_{\hs}^n\in P_k(\mathcal{T}_{\hs})$, find 
\begin{equation*}
\begin{alignedat}{2}
        &\v_{\hst} \in \P_{k}(\mathcal{T}_{\hs})\otimes P_{m+1}([t^n, t^{n+1}]), \qquad &&\rho_{\hst} \in P_k(\mathcal{T}_{\hs}) \otimes P_{m+1}([t^n, t^{n+1}]), \\
        &\u_{\hst} \in \P_{k}(\mathcal{T}_{\hs}) \otimes P_m([t^n, t^{n+1}]), \qquad &&s_{\hst}  \in P_k(\mathcal{T}_{\hs}) \otimes P_m([t^n, t^{n+1}]),
\end{alignedat}
\end{equation*}
such that $\v_{\hst}(\cdot, t^n) = \v_{\hs}^n$, $\rho_{\hst}(\cdot, t^n) = \rho_{\hs}^n$ and, for all 
\begin{equation*}
\begin{alignedat}{2}
&\w_{\hs}, \z_{\hs} \in \P_k(\mathcal{T}_{\hs}), \qquad & \eta_{\hs}, r_{\hs} \in P_k(\mathcal{T}_{\hs}),\\
&\mu_{\htime}, \lambda_{\htime}, \zeta_{\htime}, \theta_{\htime} \in P_{m} 
([t^n, t^{n+1}])
\end{alignedat}
\end{equation*}
the following equations hold:
\begin{subequations}
\label{eq:spacetime}
    \begin{align}
        &\int_{t^n}^{t^{n+1}} \Big[(\partial_t \v_{\hst}, \w_{\hs})_{\mathcal{T}_{\hs}} - (s_{\hst}, \Div \w_{\hs})_{\mathcal{T}_{\hs}} \nonumber\\
        &\qquad\quad + \langle \widehat{s}_{\hst}, \w_{\hs}\cdot \nvec \rangle_{\partial \mathcal{T}_{\hs}} + b_h(\v_{\hs}^n; \u_{\hst}, \w_{\hs})\Big]\mu_{\htime}\dt = 0, \label{eq:spacetime_a}\\
        &\int_{t^n}^{t^{n+1}}\Big[(\partial_t \rho_{\hst}, \eta_{\hs})_{\mathcal{T}_{\hs}} - (\u_{\hst}, \Grad \eta_{\hs})_{\mathcal{T}_{\hs}} \nonumber\\
        &\qquad\quad + \langle \widehat{\u_{\hst} \cdot \nvec }, \eta_{\hs} \rangle_{\partial \mathcal{T}_{\hs}} + c_h(\rho_{\hs}^n; s_{\hst}, \eta_{\hs})\Big]\lambda_{\htime}\dt = 0, \label{eq:spacetime_b}\\
        &\int_{t^n}^{t^{n+1}}\left[(s_{\hst}, r_{\hs})_{\mathcal{T}_{\hs}} - \left( \frac{\partial \E}{\partial \rho_{\hst}}, r_{\hs} \right)_{\mathcal{T}_{\hs}} \right]\zeta_{\htime}\dt = 0, \label{eq:spacetime_c}\\
        &\int_{t^n}^{t^{n+1}} \left[(\u_{\hst}, \z_{\hs})_{\mathcal{T}_{\hs}} - \left( \frac{ \partial \E}{\partial \v_{\hst}}, \z_{\hs} \right)_{\mathcal{T}_{\hs}} \right]\theta_{\htime} \dt = 0. \label{eq:spacetime_d}
    \end{align}
\end{subequations}
Finally, set $\v_{\hs}^{n+1} \coloneqq \v_{\hst}(\cdot, t^{n+1})$ and $\rho_{\hs}^{n+1} \coloneqq \rho_{\hst}(\cdot, t^{n+1})$.

\begin{remark}[Nonlinear structure of the scheme]\label{rem:nonlinear_structure}
 Nonlinearities appear in formulation~\eqref{eq:spacetime} \emph{i)}~in equations~\eqref{eq:spacetime_c} and~\eqref{eq:spacetime_d}, 
which are block diagonal, \emph{ii)} in~the forms~$b_h$ and~$c_h$, which are evaluated at time~$t^n$, and \emph{iii)} in~the coefficients $\beta_h$, $\widetilde{\mathbf{H}(\rho_h)}$, and $\widetilde{H(\v_h)}$ of the jump terms in the numerical fluxes~\eqref{eq:fluxes} (the latter coupling neighboring mesh elements), which we evaluate at time~$t^n$ as well. Since the terms in~\emph{ii)} and~\emph{iii)} are introduced solely for regularization and stabilization, their explicit evaluation at~$t^n$ does not compromise the order of accuracy observed in the coupled space--time refinement of Section~\ref{sec:convergence_test}. In this way, nonsmooth nonlinearities are avoided and the block-diagonal structure of the remaining nonlinear terms allows the nonlinear solver, which is required by implicit time integration, to be implemented efficiently. Structure-preserving DG methods based on entropy variables have also been developed for nonlinear parabolic systems, with nonlinearities treated locally within each element~\cite{GJP25}.
\end{remark}

\begin{remark}[$L^2$ projection for the dual variables]
The variables $s_{\hst}$ and $\u_{\hst}$ are the \emph{space--time} $L^2$ projections of $\frac{ \partial \E}{\partial \rho_{\hst}}$ and $\frac{ \partial \E}{\partial \v_{\hst}}$ onto $P_k(\mathcal{T}_{\hs}) \otimes P_m([t^n, t^{n+1}])$ and $\P_k(\mathcal{T}_{\hs})\otimes P_m([t^n, t^{n+1}])$, respectively. The $L^2$ projection of the dual variables is a known device both in space \cite{EggerDissipative,EggerShashkov21,LombardiPagliantini25} and in time \cite{BetschSteinmann2000,QuispelMcLaren2008,Hairer2010,BrugnanoIavernaro2016,BrugnanoIavernaro2018,AndrewsFarrell25}: for $m=0$ and $\varepsilon=\beta_h=0$, the time discretization reduces to the averaged vector field method \cite{QuispelMcLaren2008}, which, for the present nonlinear acoustic system, corresponds to the path integral of the dual variables along a straight segment in the state variables used in \cite{LuccaDumbser2026}, while for general $m$ it is closely related to the energy-preserving variant of collocation methods \cite{Hairer2010}, to the line integral methods of Brugnano and Iavernaro \cite{BrugnanoIavernaro2016,BrugnanoIavernaro2018}, and to energy-conserving time finite elements \cite{BetschSteinmann2000}. The new element of the present method is the combination of this projection with a DG discretization in space, which makes the discrete energy/entropy balance local.
\hfill $\blacksquare$
\end{remark}
\begin{theorem}[Properties of the fully discrete method]
\label{thm:cons_fully_discrete}
Let $(\v_{\hst},\allowbreak\rho_{\hst},\allowbreak\u_{\hst},\allowbreak s_{\hst})$ be a solution of the fully discrete space--time scheme \eqref{eq:spacetime}. Then the following properties hold.
\begin{enumerate}
\item \textbf{Local conservation.}
For every element $K\in\mathcal{T}_{\hs}$,
\begin{align*}
\int_K \v_{\hs}^{n+1}\cdot \e_i\dx-\int_K \v_{\hs}^{n}\cdot \e_i\dx
+ \int_{t^n}^{t^{n+1}}\int_{\partial K}
\widehat{s}_{\hst}\,\e_i\cdot\nvec\dS\dt &= 0,\\
\int_K \rho_{\hs}^{n+1}\dx-\int_K \rho_{\hs}^{n}\dx
+ \int_{t^n}^{t^{n+1}}\int_{\partial K}
\widehat{\u_{\hst}\cdot\nvec}\dS\dt &= 0.
\end{align*}
\item \textbf{Local energy/entropy balance.}
For every element $K\in\mathcal{T}_{\hs}$,
\begin{equation*}
\begin{aligned}
&\int_K \E(\v_{\hs}^{n+1},\rho_{\hs}^{{n+1}}
)\dx
- \int_K \E(\v_{\hs}^{n},\rho_{\hs}^{n})\dx \\
  &\qquad
{+} 
\int_{t^n}^{t^{n+1}}\int_{\partial K}
  (\widehat{\F}_{\hst}+\widehat{\G}_{\hst})\cdot\nvec \dS\dt
  = - \int_{t^n}^{t^{n+1}} \Pi_{K,\hst}\dt
  \le 0,
  \end{aligned}
  \end{equation*}
  and the dissipation $\Pi_{K,\hst}$ vanishes if $\varepsilon=\beta_{\hs}=0$. Here $\widehat{\F}_{\hst}$, $\widehat{\G}_{\hst}$ and $\Pi_{K,\hst}$ are defined as in \eqref{eq:Fhat}, \eqref{eq:Ghat} and \eqref{eq:Pi_loc}, with the dual variables evaluated at the space--time solution and all coefficients of the stabilization ($\varepsilon$, $H$, $\mathbf{H}$, $\beta_h$ and the harmonic averages) evaluated as in \eqref{eq:spacetime}.
\item \textbf{Global energy/entropy balance.}
\begin{equation*}
\begin{aligned}
&\int_{\Omega}\E(\v_{\hs}^{n+1},\rho_{\hs}^{n+1})\dx
-\int_{\Omega}\E(\v_{\hs}^{n},\rho_{\hs}^{n})\dx \\
&\qquad =
-\int_{t^n}^{t^{n+1}}
\sum_{K\in\mathcal{T}_{\hs}}
\Pi_{K,\hst}
\dt
\le 0 .
\end{aligned}
\end{equation*}
\item \textbf{Involution.}
For every $\q\in\mathbf{Q}_{\hs}$,
\begin{equation*}
(\v_{\hs}^{n+1}-\v_{\hs}^{n},\Curl\q)_{\mathcal{T}_{\hs}}=0.
\end{equation*}
\end{enumerate}
\end{theorem}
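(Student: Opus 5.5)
The plan is to mimic the semidiscrete proofs (Lemma~\ref{lem:conservativity}, Theorem~\ref{thm:energy_loc}, Corollary~\ref{cor:energy_glob}, Lemma~\ref{lem:involution}), using the fact that $\partial_t\v_{\hst}$ and $\partial_t\rho_{\hst}$ have degree $m$ in time. Because of this degree count, admissible temporal test functions in \eqref{eq:spacetime} can be taken equal to the time factors of the trial functions' derivatives and of the dual variables.

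\emph{Local conservation and involution.} Take $\w_{\hs}=\charfun_K\e_i$, $\eta_{\hs}=\charfun_K$ in \eqref{eq:spacetime_a}--\eqref{eq:spacetime_b} with $\mu_{\htime}=\lambda_{\htime}=1\in P_m$. The divergence, gradient, $b_h$ and $c_h$ terms vanish, since the test functions are elementwise constant. Then $\int_{t^n}^{t^{n+1}}\partial_t\v_{\hst}\dt=\v^{n+1}_{\hs}-\v^n_{\hs}$ by continuity of the trial functions in time, which gives item~1. For item~4, take $\w_{\hs}=\Curl\q$ and $\mu_{\htime}=1$. The argument of Lemma~\ref{lem:involution} applies pointwise in time, using $\Div\Curl\q=0$ (which kills both the $s$ term and $b_h(\v^n_{\hs};\u_{\hst},\Curl\q)$) and the normal continuity of $\Curl\q$ against the single-valued $\widehat s_{\hst}$.

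\emph{Energy balance.} The key step is the discrete chain rule in time, replacing \eqref{eq:chain_rule_disc}. Since the statement involves the space--time pairs $(\u_{\hst},\cdot)$ integrated over the slab, I would proceed as follows. Test \eqref{eq:spacetime_a} and \eqref{eq:spacetime_b} with space--time functions of the form $\u_{\hst}\charfun_K$ and $s_{\hst}\charfun_K$. These belong to the test space because the equations are linear in the test function, so by linearity one may test with any element of $\P_k\otimes P_m$, not just products. Similarly, test \eqref{eq:spacetime_c} and \eqref{eq:spacetime_d} with $\charfun_K\partial_t\rho_{\hst}$ and $\charfun_K\partial_t\v_{\hst}$, which lie in $P_k\otimes P_m$ because the trial spaces have degree $m+1$ in time. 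Adding the latter two gives
\begin{equation*}
\int_{t^n}^{t^{n+1}}\!\big[(s_{\hst},\partial_t\rho_{\hst})_K+(\u_{\hst},\partial_t\v_{\hst})_K\big]\dt=\int_{t^n}^{t^{n+1}}\!\partial_t\!\int_K\E(\v_{\hst},\rho_{\hst})\dx\dt ,
\end{equation*}
and the right-hand side equals the difference of the energies at $t^{n+1}$ and $t^n$ by the fundamental theorem of calculus and the exact chain rule applied to the polynomial-in-time state variables. Adding the former two and using this identity reproduces, pointwise in time, exactly the algebra of the proof of Theorem~\ref{thm:energy_loc}: the cross-average identity for $\widehat\F$, the average/jump splitting for $\widehat\G$, and the treatment of wall facets. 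This works because all manipulations there are algebraic at each fixed $t$, with the stabilization coefficients frozen at $t^n$ and single-valued on facets. Integrating over $(t^n,t^{n+1})$ then yields item~2. Nonnegativity of $\Pi_{K,\hst}$ follows from $\alpha_h,\varepsilon,\beta_h\ge0$ and the positive (semi)definiteness of $H$, $\mathbf H$ and their harmonic averages at $t^n$.

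\emph{Global balance.} Sum item~2 over $K$. The fluxes $(\widehat\F_{\hst}+\widehat\G_{\hst})\cdot\nvec$ are single-valued with opposite normals and vanish on wall facets, exactly as in Corollary~\ref{cor:energy_glob}, so they cancel.

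The main obstacle I anticipate is the justification of the test-function choices in the energy step. The scheme is written with tensor-product tests $\w_{\hs}\mu_{\htime}$, so I must note that linearity extends it to the full tensor space $\P_k(\mathcal T_{\hs})\otimes P_m$. I also need that $\partial_t\v_{\hst}$, $\partial_t\rho_{\hst}$, $\u_{\hst}$ and $s_{\hst}$ have exactly the degree $m$ in time that this test space requires. This degree matching is the Petrov--Galerkin design, and it is what makes the chain rule exact despite the nonlinear $\partial\E/\partial\rho_{\hst}$.
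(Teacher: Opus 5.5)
Your proposal is correct and takes essentially the same route as the paper's proof. It uses the same test-function choices: elementwise constants with a constant temporal factor for conservation; $\charfun_K\u_{\hst}$, $\charfun_K s_{\hst}$, $\charfun_K\partial_t\rho_{\hst}$, $\charfun_K\partial_t\v_{\hst}$ for the energy balance; and $\Curl\q$ for the involution, each followed by a repetition of the semidiscrete arguments. The only differences are cosmetic ($\mu_{\htime}=1$ instead of $1/\htime$), plus your explicit linearity and degree-matching justification for the non-product test functions, which the paper leaves implicit.
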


\begin{proof}
Statements~\emph{1.--4.} are proved exactly as Lemma~\ref{lem:conservativity}, Theorem~\ref{thm:energy_loc}, Corollary~\ref{cor:energy_glob}, and Lemma~\ref{lem:involution}, respectively. Since the arguments are identical, we only indicate the corresponding choices of test functions.
To establish conservation on an element $K$, choose in~\eqref{eq:spacetime_a} and~\eqref{eq:spacetime_b}
\begin{equation*}
\w_{\hs} = \charfun_K \e_i,
\qquad
\mu_{\htime} = \frac{1}{\htime},
\qquad
\eta_{\hs} = \charfun_K,
\qquad
\lambda_{\htime} = \frac{1}{\htime}.
\end{equation*}
To prove local energy conservation on \(K\), take in~\eqref{eq:spacetime}
\begin{equation*}
    \begin{aligned}
    &\w_{\hs}\mu_{\htime} = \charfun_K \u_{\hst}, &\qquad &\eta_{\hs}\lambda_{\htime} = \charfun_K s_{\hst},\\
    &r_{\hs}\zeta_{\htime} = \charfun_K \partial_t\rho_{\hst}, &\qquad &\z_{\hs}\theta_{\htime} = \charfun_K \partial_t \v_{\hst}.
    \end{aligned}
\end{equation*}
Global energy conservation then follows by summing the resulting identity over all elements \(K \in \mathcal{T}_{\hs}\).
Finally, to obtain the involution property, choose in~\eqref{eq:spacetime_a}
\begin{equation*}
    \w_{\hs} = \Curl \q,
    \qquad
    \q \in \mathbf{Q}_{\hs}, 
    \qquad
    \mu_{\htime} = \frac{1}{\htime}.
\end{equation*}
With these choices, the arguments of the cited results can be repeated to obtain the stated properties.
\end{proof}

\section{Numerical results}\label{sec:numer}
In this section, we present a numerical assessment of the proposed scheme \eqref{eq:spacetime}, focusing on its convergence properties for smooth solutions, its stability in the presence of shocks, and its discrete conservation properties. We consider the energy density 
\begin{equation*}
    \E(\v,\rho) = \frac{1}{2} \lvert \v \rvert^2 + \frac{1}{\gamma (\gamma + 1)} \rho^{\gamma + 1},
\end{equation*}
where $\gamma \ge 1$ is an  exponent that varies depending on the test case. The domain of physically admissible states is defined by the open convex set
\begin{equation*}
    \mathcal{A} \coloneqq \{ (\v,\rho) \in \mathbb{R}^d \times \mathbb{R} \mid \rho > 0 \}.
\end{equation*}
We note that the Hessian of $\E$ is positive definite on $\mathcal{A}$. Although the scheme is not provably positivity-preserving for  $\rho$, we observe that $\rho$ remains positive in all our numerical experiments.

We evaluate two variants of the proposed framework:
\begin{enumerate}
    \item \textbf{Conservative scheme:} We set $\beta_h = \varepsilon = 0$. By Theorem~\ref{thm:cons_fully_discrete}, this variant satisfies local and global discrete energy/entropy conservation.
    \item \textbf{Dissipative scheme:} We set $\beta_h = \frac{1}{2}$ and define $\varepsilon$ elementwise on the mesh~$\mathcal{T}_{\hs}$ as
    \begin{equation*}
        \varepsilon|_K = C_{\varepsilon}\, h_K \lVert \mathcal{L}_{\hs}^k(\jump{\rho_{\hs}^n})\rVert_{L^2(K)},
    \end{equation*}
    where $h_K$ denotes the local size of the element~$K$, $\mathcal{L}_{\hs}^k$ denotes the lifting operator in~\cite[Chapter 38.4.1]{ErnGuermond-II:2020}, and $C_{\varepsilon}>0$ is a constant; we set $C_{\varepsilon} = 2$ in this work.
This artificial viscosity parameter serves as an effective shock-capturing mechanism. Moreover, because $\mathcal{L}_{\hs}^k(\jump{\rho - \pi_h^k\rho}) \lesssim {\hs}^k$ for sufficiently smooth $\rho$, high-order convergence is preserved in the case of smooth solutions.
\end{enumerate}

While our method does not require a CFL condition for energy/entropy stability (see Theorem~\ref{thm:cons_fully_discrete}), in practice, the time step must be sufficiently small to ensure convergence of the nonlinear iterative solver 
and preserve the positivity of the density. Then, the time step $\htime$ is chosen according to the condition
\begin{equation*}
    \htime = \Ctime \hs,
\end{equation*}
with a constant $\Ctime > 0$. The parameters for each test are detailed in Table~\ref{tab:par}.
Unstructured triangular meshes in space and periodic boundary conditions are used in all tests. The nonlinear system is solved using Newton's method, while each resulting linear system is solved with GMRES and a block-Jacobi preconditioner.

The scheme has been implemented using the open-source finite element library NGSolve \cite{ngSolve}; the source code is publicly available at \url{https://github.com/EnricoZampa/StructurePreservingDG}.
\begin{table}
\centering
\begin{tabular}{c|cccccccc}
    Section & $\Omega$ & $\gamma$ & $\hs$ & $\Ctime$ & $\tend$ &$k$ & $m$ \\
    \hline
    \textsection~\ref{sec:smooth_test} & $[-0.5,0.5]^2$ & $1.4$ & $\frac{1}{40}$& $0.1$ & $0.25$& $2$ & $1$ \\[0.1cm]
    \textsection~\ref{sec:convergence_test} & $[0,2\pi]^2$ & $3$ & $-$ & $\frac{1}{4 \pi}$ & $0.5$& $-$& $-$ \\[0.1cm]
    \textsection~\ref{sec:riemann_test} & $[-1,1]^2$ & $2$ & $\frac{1}{120}$ & $0.1$ & $0.3$ & $1$ & $0$
     \\
\end{tabular}
\caption{Parameters for the numerical tests (``$-$'' denotes a variable parameter).}
\label{tab:par}
\end{table}

\subsection{Conservation properties}
\label{sec:smooth_test}
We begin by verifying the local and global conservation properties of the conservative scheme. This test was proposed in~\cite{LuccaDumbser2026}, with the initial conditions given by 
\begin{equation*}
    \rho(x, y, 0) = 4 + e^{-\frac{1}{2} \left(\frac{x^2 + y^2}{0.05^ 2} \right)}, \qquad \v(x,y,0) = \mathbf{0}.
\end{equation*}
We evaluate:
\begin{enumerate}
    \item The global energy conservation error at each time step:
    \begin{equation*}
     \left\lvert \int_{\Omega}\E(\v^{n+1}_{\hs},\rho^{n+1}_{\hs})\dx - \int_{\Omega}\E(\v_{\hs}^0,\rho^0_{\hs})\dx \right\rvert.
     \end{equation*}
     \item The local energy conservation error on an arbitrary element $K$:
     \begin{equation*}
         \left \lvert \int_{K}\E(\v_{\hs}^{n+1},\rho_{\hs}^{n+1})\dx -\int_K \E(\v_{\hs}^n,\rho_{\hs}^n)\dx + \int_{t^n}^{t^{n+1}}\int_{\partial K}\widehat{\F}_{\hst}\cdot \nvec \dS \dt \right \rvert.
     \end{equation*}
     \item The weak two-dimensional curl of $\v_{\hs}^n$, which is defined as the solution~$\phi_{\hs}^{n}\in P_{k+1}(\mathcal{T}_{\hs})\cap H^1_0(\Omega)$ of 
     \begin{equation*}
         (\phi_{\hs}^n, \psi_{\hs}) = (\v_{\hs}^{n}, \nabla^{\perp}\psi_{\hs}), \qquad \forall \psi_{\hs} \in P_{k+1}(\mathcal{T}_{\hs})\cap H^1_0(\Omega),
     \end{equation*}
     with $\nabla^{\perp}\psi_{\hs} = (\partial_2\psi_{\hs},-\partial_1 \psi_{\hs})$.
\end{enumerate}
The density $\rho$ at the final time is plotted in Figure \ref{fig:rho_tend}, while the conservation properties are plotted in Figure \ref{fig:cons_smooth}.
As expected, the conservation properties hold up to machine precision.
\begin{figure}[htbp]
\centering
\includegraphics[width = \linewidth]{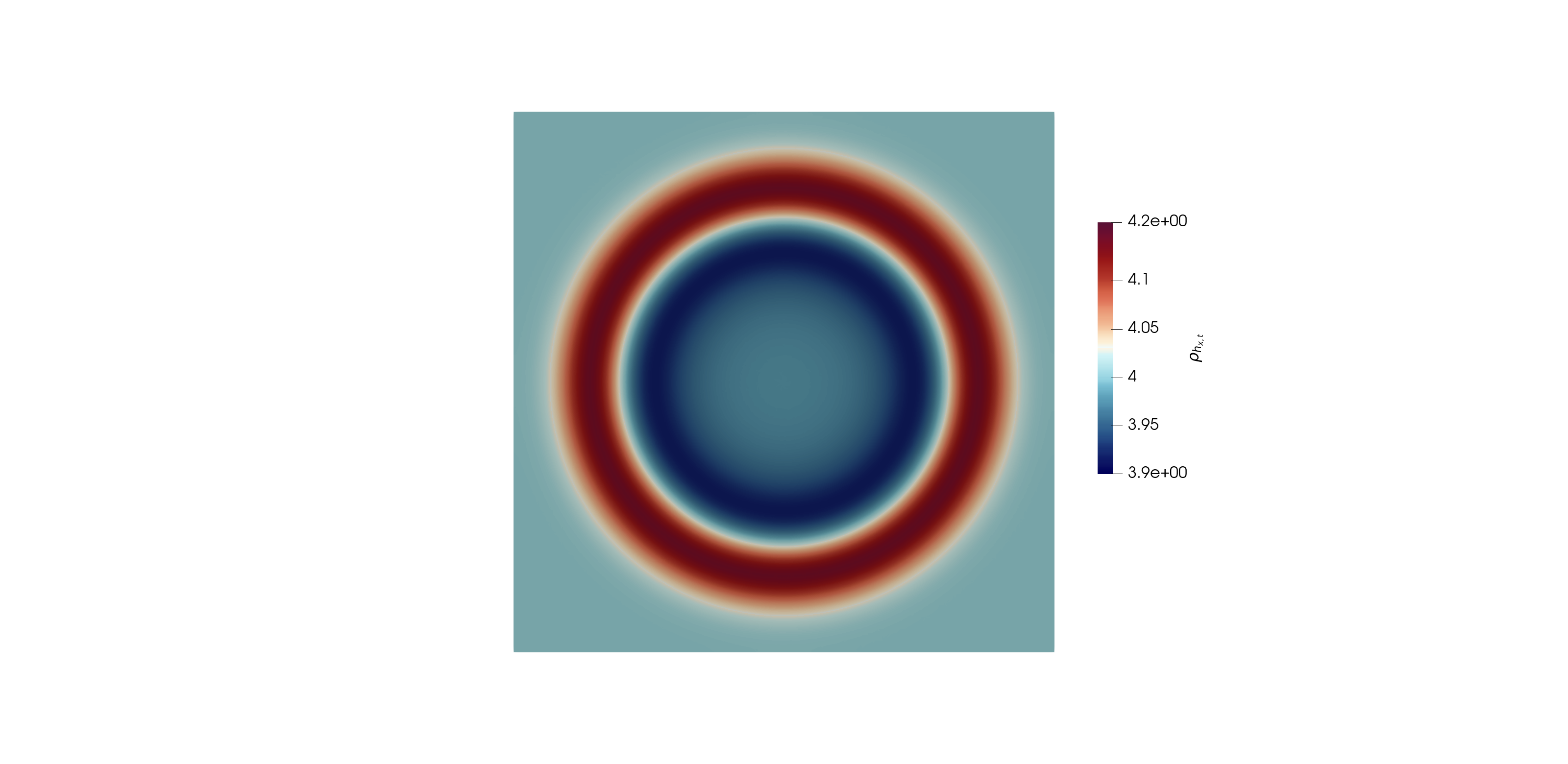}
\caption{$\rho_{\hst}$ at the final time $\tend = 0.25$.}
\label{fig:rho_tend}
\end{figure}
\begin{figure}[htbp]
    \centering
    \begin{subfigure}[t]{0.3\textwidth}
        \centering     \makebox[\linewidth][l]{\includegraphics[width=1.034\linewidth]{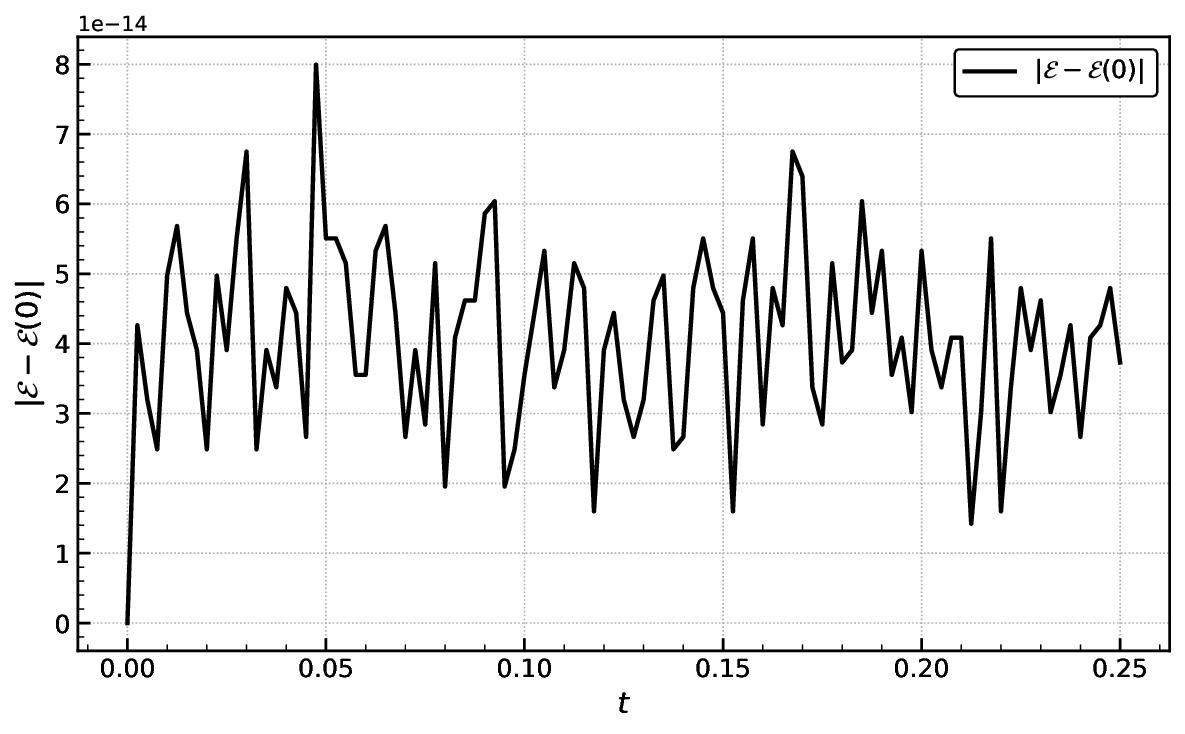}}
        \caption{Global energy conservation error.}
        \label{fig:sub1}
    \end{subfigure}
    \hfill
    \begin{subfigure}[t]{0.3\textwidth}
        \centering
\includegraphics[width=\linewidth]{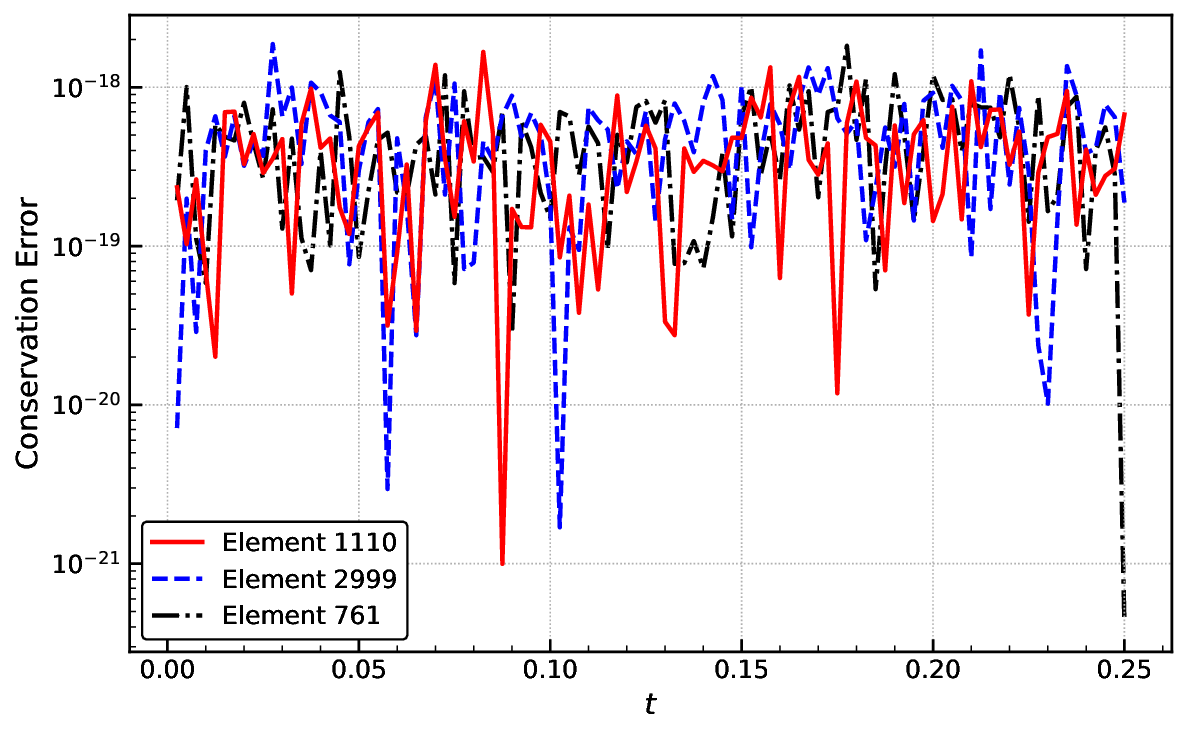}
        \caption{Local energy conservation error on three arbitrarily chosen elements.}
        \label{fig:sub2}
    \end{subfigure}
    \hfill
    \begin{subfigure}[t]{0.3\textwidth}
        \centering
        \includegraphics[width=\linewidth]{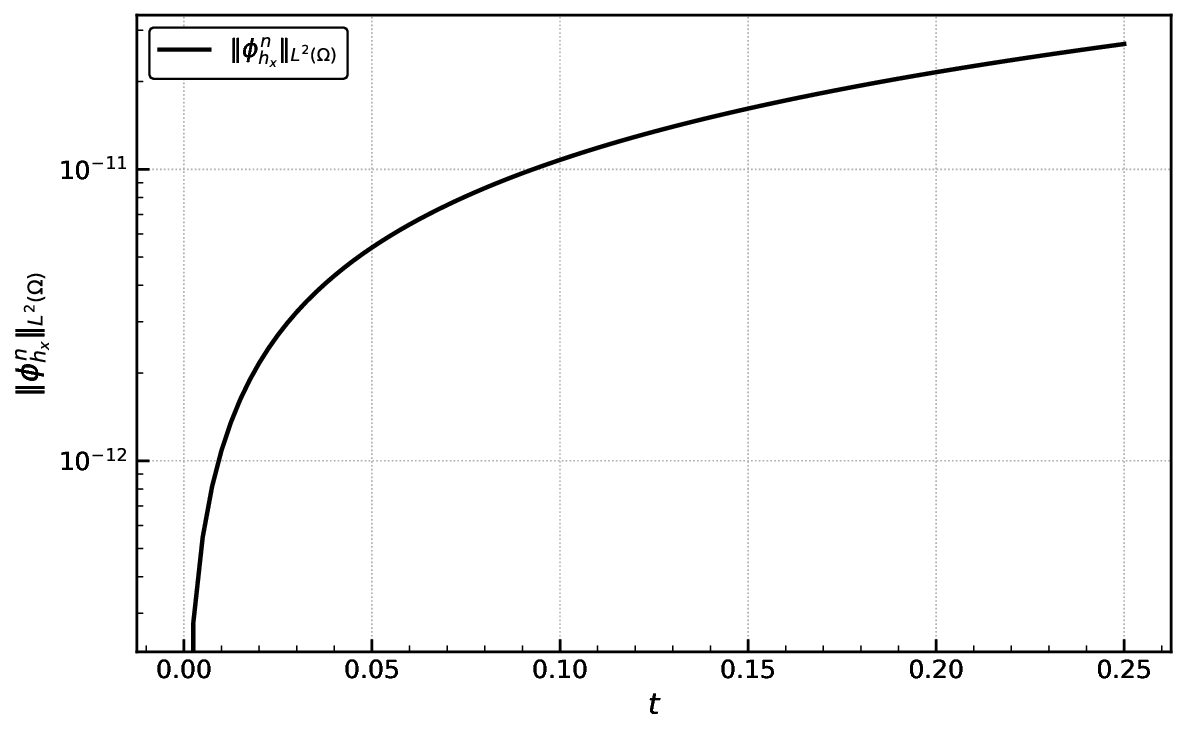}
        \caption{Weak curl of $\v_{\hs}^n$.}
        \label{fig:sub3}
    \end{subfigure}

    \caption{Conservation properties for the test in Section \ref{sec:smooth_test}.}
\label{fig:cons_smooth}
\end{figure}

\subsection{Convergence rates}\label{sec:convergence_test}
For this test case, the exact solution is given by
\begin{equation*}
    \rho(x, y, t) = R(x + y, t), \qquad \v(x, y, t) = \frac{1}{2\sqrt{2}}\rho(x,y,t)^2\begin{pmatrix}1\\1\end{pmatrix},
\end{equation*}
where $R(\xi, t)$ satisfies the Burgers equation
\begin{equation*}
    \partial_t R + \sqrt{2} R \partial_{\xi} R = 0,
\end{equation*}
subject to the initial condition $R(\xi, 0) = 1 + 0.3\sin\xi$. A direct calculation confirms that $(\v,\rho)$ solves \eqref{eq:nonlinear_acoustics} with $\gamma = 3$. The solution remains smooth over short time intervals. We apply both the conservative scheme ($\beta_h = 0, \varepsilon = 0$) and the dissipative scheme. The resulting $L^2$ errors and convergence rates are summarized in Table~\ref{tab:convergence_nonlinear_comparison}. For the conservative scheme, we observe convergence rates of~$k+1$ for~$\rho$ and~$k$ for~$\v$. In contrast, the dissipative scheme achieves the optimal convergence rate of~$k+1$ for both variables. This highlights that the stabilization improves the convergence rate of~$\v$, while the artificial viscosity remains sufficiently small to avoid spoiling the optimal rate.

\begin{table}[htpb]
\centering
\caption{Experimental convergence rates for $\rho$ and $\v$ at the final time $\tend = 0.5$ using the conservative and the dissipative method. Here $n_{\mathrm{el}}$ is such that $\hs = \frac{2\pi}{n_{\mathrm{el}}}$.}
\label{tab:convergence_nonlinear_comparison}
\resizebox{\textwidth}{!}{
\begin{tabular}{cc|cccc|cccc}
\hline
\multirow{2}{*}{$k = m+1$} & \multirow{2}{*}{$n_{\mathrm{el}}$} & \multicolumn{4}{c|}{\textbf{Conservative}} & \multicolumn{4}{c}{\textbf{Dissipative}} \\
\cline{3-10}
 & & $L^2(\rho)$ \textbf{error} & \textbf{Rate} & $L^2(\v)$ \textbf{error} & \textbf{Rate} & $L^2(\rho)$ \textbf{error} & \textbf{Rate} & $L^2(\v)$ \textbf{error} & \textbf{Rate} \\
\hline
1 & 10 & 4.39e-02 & - & 1.42e-01 & - & 3.58e-02 & - & 4.86e-02 & - \\
  & 20 & 9.50e-03 & 2.21 & 6.62e-02 & 1.10 & 7.94e-03 & 2.17 & 1.12e-02 & 2.11 \\
  & 40 & 2.62e-03 & 1.86 & 3.40e-02 & 0.96 & 2.05e-03 & 1.95 & 2.95e-03 & 1.93 \\
\hline
2 & 6  & 2.02e-02 & - & 3.50e-02 & - & 2.30e-02 & - & 2.72e-02 & - \\
  & 12 & 2.11e-03 & 3.26 & 7.37e-03 & 2.25 & 2.03e-03 & 3.50 & 2.91e-03 & 3.23 \\
  & 24 & 2.36e-04 & 3.16 & 1.77e-03 & 2.05 & 2.21e-04 & 3.20 & 3.38e-04 & 3.10 \\
\hline
3 & 4  & 1.50e-02 & - & 2.09e-02 & - & 3.96e-02 & - & 3.61e-02 & - \\
  & 8  & 9.35e-04 & 4.00 & 3.39e-03 & 2.62 & 1.24e-03 & 4.99 & 1.43e-03 & 4.66 \\
  & 16 & 5.91e-05 & 3.98 & 4.08e-04 & 3.06 & 4.69e-05 & 4.73 & 6.75e-05 & 4.40 \\
\hline
\end{tabular}
}
\end{table}

\subsection{Circular explosion}\label{sec:riemann_test}
Finally, to show the robustness of the dissipative method in the presence of shock waves, we consider a one-dimensional Riemann problem in the radial direction. The initial conditions are given by 
\begin{equation}
\rho(x, y, 0) = \begin{cases} 2 \qquad &\text{ if $\sqrt{x^2 + y^2} < 0.6$}, \\
0.1 \qquad &\text{ otherwise,}
\end{cases}\qquad \v(x,y,0) = \mathbf{0}.
\end{equation}
The discontinuous initial data are approximated by their cell averages on~$\mathcal{T}_{\hs}$.
The discrete $\rho_{\hst}$ at the final time $\tend = 0.3$ is displayed in Figure \ref{fig:RP_cut}, together with the reference solution obtained by solving the corresponding one-dimensional Riemann problem
with geometric reaction source terms using a MUSCL--Hancock method on a very fine mesh, see e.g. \cite{toro-book}:  
we see that the shock is correctly captured and the oscillations are essentially suppressed. Moreover, from Figures \ref{fig:RP_energy} and \ref{fig:RP_curl} respectively, we observe that the global energy/entropy is correctly dissipated and the weak curl of $\v_{\hst}$ remains close to machine precision.
\begin{figure}[htbp]
    \centering
    \begin{subfigure}[t]{0.3\textwidth}
        \centering     \includegraphics[width=\linewidth]{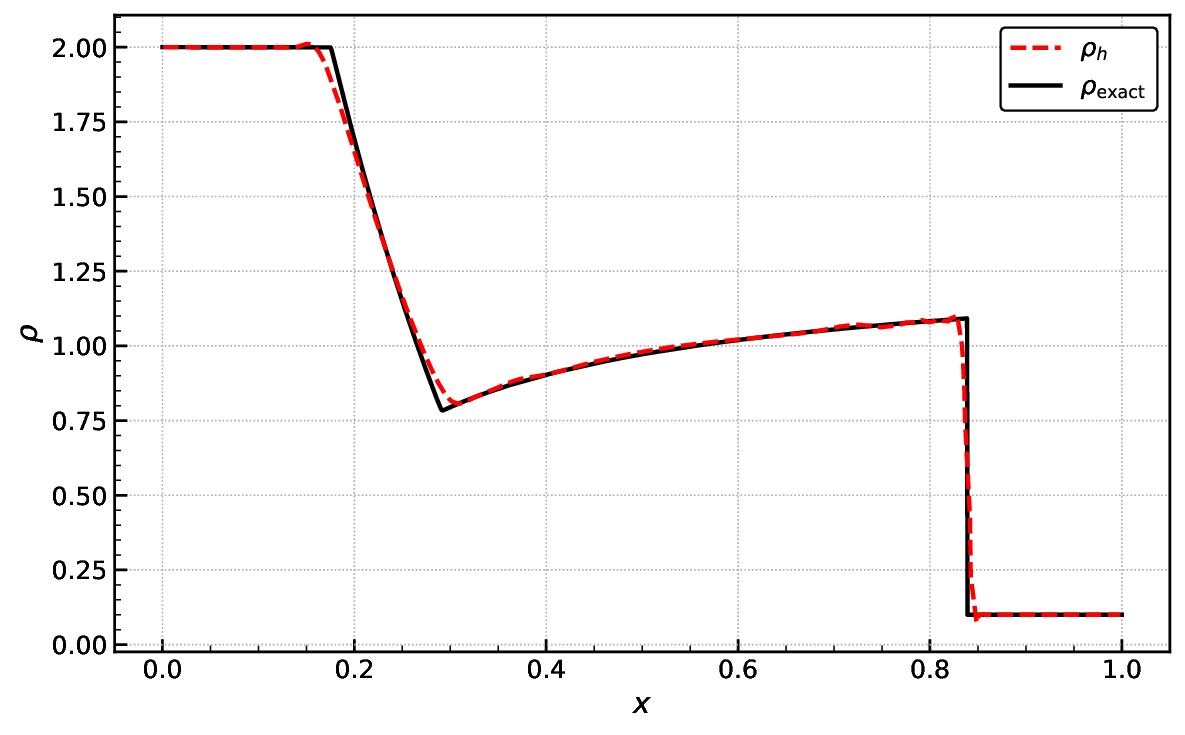}
        \caption{Cut of $\rho_{\hst}$ along $y = 0$ and reference solution.}
        \label{fig:RP_cut}
    \end{subfigure}
    \hfill
    \begin{subfigure}[t]{0.3\textwidth}
        \centering
\includegraphics[width=\linewidth]{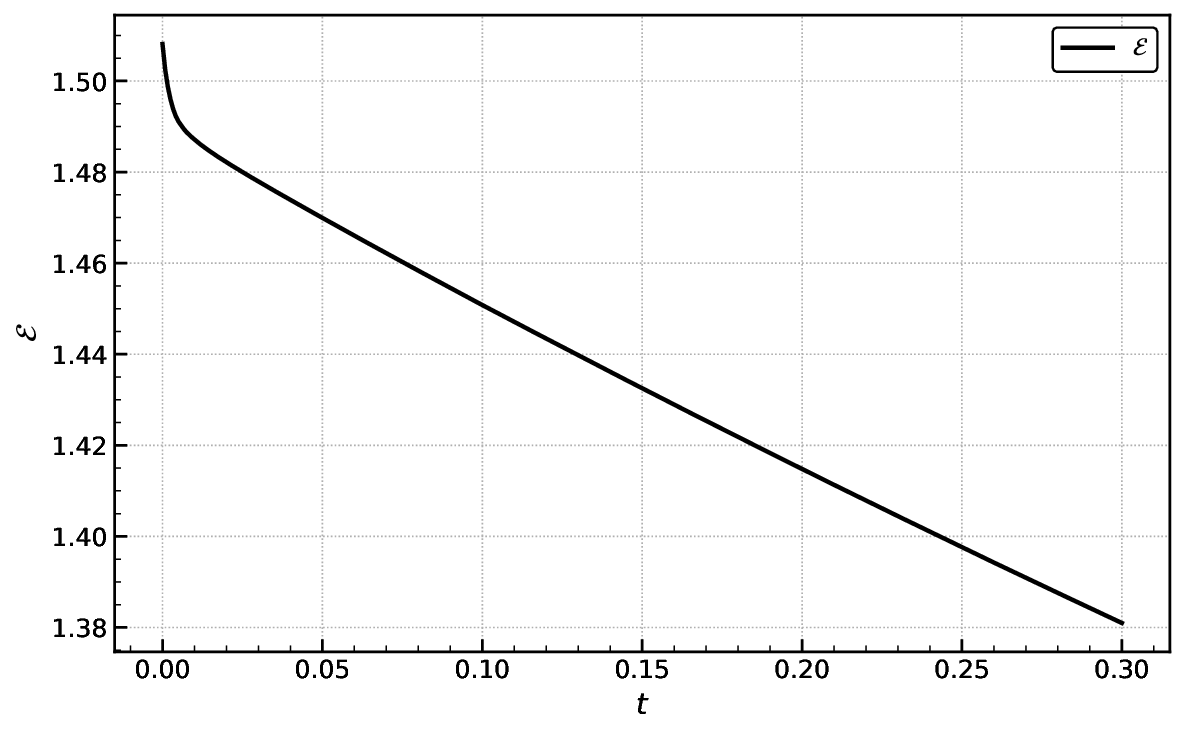}
        \caption{Global energy/entropy.}
        \label{fig:RP_energy}
    \end{subfigure}
    \hfill
    \begin{subfigure}[t]{0.3\textwidth}
        \centering
        \includegraphics[width=\linewidth]{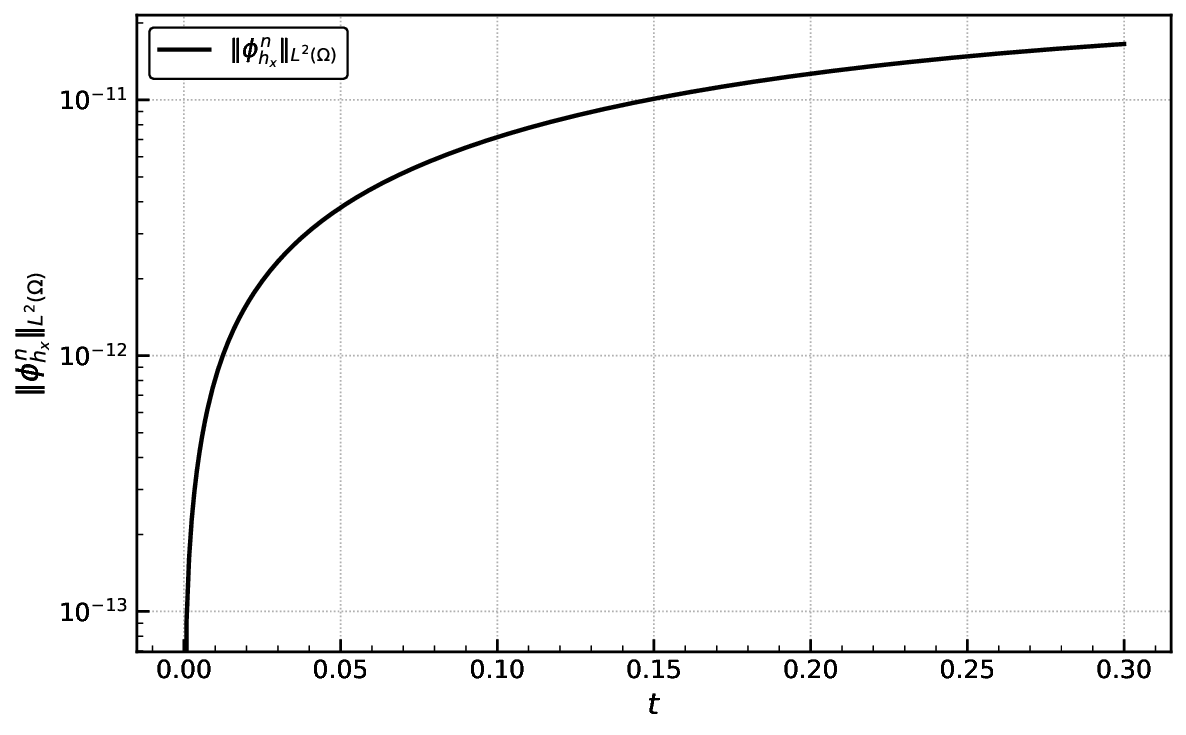}
        \caption{Weak curl of $\v_{\hs}^n$.}
        \label{fig:RP_curl}
    \end{subfigure}

    \caption{Two-dimensional circular explosion problem.}
    \label{fig:RP}
\end{figure}

\section{Conclusions}\label{sec:conclusions}
We introduced a new class of high-order accurate space--time schemes for nonlinear wave systems on unstructured meshes, using a DG discretization in space and a continuous--discontinuous Petrov--Galerkin discretization in time. The key idea consists in representing the state variables and the dual variables in the same DG space, where the dual variables are obtained as $L^2$ projections of the derivatives of the energy with respect to the conserved variables and are subsequently used as test functions in the energy/entropy balance. 
Our new schemes are locally conservative, satisfy a local energy/entropy balance and preserve the curl involution of the velocity field in a weak sense. 
Thanks to a suitable viscous regularization, they preserve the curl involution weakly also in the presence of discontinuities.

Future work will concern nonlinear Maxwell systems and more complex hyperbolic systems in which the fluxes are nonlinear functions of the dual variables, such as the compressible Euler equations. 

\section*{Acknowledgments}

M.D. was funded by the Fondazione Caritro via the project SOPHOS and by the European Research Council (ERC) under the European Union's Horizon Europe research and innovation programme via the project SOPHOS, grant agreement no.~ERC-ADG-2025-101265878-SOPHOS. 

M.D. and E.Z. are members of the Gruppo Nazionale per il Calcolo Scientifico dell'Istituto Nazionale di Alta Matematica (GNCS-INdAM).

The authors gratefully acknowledge the open-source finite element library NGSolve of J.~Sch{\"o}berl \textit{et~al.}~\cite{ngSolve}, available at \url{https://ngsolve.org}, which has been used for the implementation of the proposed schemes. We are furthermore very grateful to Joachim Sch{\"o}berl for all the very inspiring and highly productive scientific discussions.

\bibliographystyle{plain}
\bibliography{bibliography}

@book {ErnGuermond-II:2020,
    AUTHOR = {Ern, A. and Guermond, J.-L.},
     TITLE = {Finite elements {II}---{G}alerkin approximation, elliptic and
              mixed {PDE}s},
    SERIES = {Texts Appl. Math.},
    VOLUME = {73},
 PUBLISHER = {Springer, Cham},
      YEAR = {2021},
     PAGES = {ix+492},
      ISBN = {978-3-030-56922-8; 978-3-030-56923-5},
   MRCLASS = {65-01},
  MRNUMBER = {4269305},
}

@article {MaoXi25,
    AUTHOR = {Mao, S. and Xi, R.},
     TITLE = {An incompressibility, {${\rm div}\,B = 0$} preserving, current
              density, helicity, energy-conserving finite element method for
              incompressible {MHD} systems},
   JOURNAL = {J. Comput. Phys.},
  FJOURNAL = {Journal of Computational Physics},
    VOLUME = {538},
      YEAR = {2025},
     PAGES = {114130},
      ISSN = {0021-9991,1090-2716},
   MRCLASS = {65M60 (65F08 65M06 65M12 76D05 76W05)},
  MRNUMBER = {4917731},
MRREVIEWER = {Xiaodi\ Zhang},
}

@article {HuLeeXu21,
    AUTHOR = {Hu, K. and Lee, Y.-J. and Xu, J.},
     TITLE = {Helicity-conservative finite element discretization for
              incompressible {MHD} systems},
   JOURNAL = {J. Comput. Phys.},
  FJOURNAL = {Journal of Computational Physics},
    VOLUME = {436},
      YEAR = {2021},
     PAGES = {110284},
      ISSN = {0021-9991,1090-2716},
   MRCLASS = {65M60 (76W05)},
  MRNUMBER = {4236017},
}

@article {GawlikGB22,
    AUTHOR = {Gawlik, E. S. and Gay-Balmaz, F.},
     TITLE = {A finite element method for {MHD} that preserves energy,
              cross-helicity, magnetic helicity, incompressibility, and
              {${\rm div}\, B = 0$}},
   JOURNAL = {J. Comput. Phys.},
  FJOURNAL = {Journal of Computational Physics},
    VOLUME = {450},
      YEAR = {2022},
     PAGES = {110847},
      ISSN = {0021-9991,1090-2716},
   MRCLASS = {65M60 (76W05)},
  MRNUMBER = {4346694},
}

@article {EggerShashkov21,
    AUTHOR = {Egger, H. and Habrich, O. and Shashkov, V.},
     TITLE = {On the energy stable approximation of {H}amiltonian and
              gradient systems},
   JOURNAL = {Comput. Methods Appl. Math.},
  FJOURNAL = {Computational Methods in Applied Mathematics},
    VOLUME = {21},
      YEAR = {2021},
    NUMBER = {2},
     PAGES = {335--349},
      ISSN = {1609-4840,1609-9389},
   MRCLASS = {37K06 (37L65 47J35 65J08)},
  MRNUMBER = {4235809},
}

@Misc{LombardiPagliantini25,
  author = {D. Lombardi and C. Pagliantini},
  title  = {{Conformal variational discretisation of infinite dimensional Hamiltonian systems with gradient flow dissipation}},
  note   = {arXiv:2412.06310},
  year   = {2024},
}

@article {AFW06,
    AUTHOR = {Arnold, D. N. and Falk, R. S. and Winther, R.},
     TITLE = {Finite element exterior calculus, homological techniques, and
              applications},
   JOURNAL = {Acta Numer.},
  FJOURNAL = {Acta Numerica},
    VOLUME = {15},
      YEAR = {2006},
     PAGES = {1--155},
      ISSN = {0962-4929,1474-0508},
      ISBN = {0-521-86815-7},
   MRCLASS = {58A15 (47N40 65N30 74G15 74S05)},
  MRNUMBER = {2269741},
MRREVIEWER = {Thomas\ Garrity},
}

@article {HughesFrancaMallet_86I,
    AUTHOR = {Hughes, T. J. R. and Franca, L. P. and Mallet, M.},
     TITLE = {A new finite element formulation for computational fluid
              dynamics. {I}. {S}ymmetric forms of the compressible {E}uler
              and {N}avier-{S}tokes equations and the second law of
              thermodynamics},
   JOURNAL = {Comput. Methods Appl. Mech. Engrg.},
  FJOURNAL = {Computer Methods in Applied Mechanics and Engineering},
    VOLUME = {54},
      YEAR = {1986},
    NUMBER = {2},
     PAGES = {223--234},
      ISSN = {0045-7825,1879-2138},
   MRCLASS = {76-08 (76N10)},
  MRNUMBER = {831553},
}

@article {HiltebrandMishra_spacetimeDG,
    AUTHOR = {Hiltebrand, A. and Mishra, S.},
     TITLE = {Entropy stable shock capturing space-time discontinuous
              {G}alerkin schemes for systems of conservation laws},
   JOURNAL = {Numer. Math.},
  FJOURNAL = {Numerische Mathematik},
    VOLUME = {126},
      YEAR = {2014},
    NUMBER = {1},
     PAGES = {103--151},
      ISSN = {0029-599X,0945-3245},
   MRCLASS = {65M60 (35L65 65M12)},
  MRNUMBER = {3149074},
MRREVIEWER = {Manfred\ Dobrowolski},
}

@article {ArnoldIP,
    AUTHOR = {Arnold, D. N.},
     TITLE = {An interior penalty finite element method with discontinuous
              elements},
   JOURNAL = {SIAM J. Numer. Anal.},
  FJOURNAL = {SIAM Journal on Numerical Analysis},
    VOLUME = {19},
      YEAR = {1982},
    NUMBER = {4},
     PAGES = {742--760},
      ISSN = {0036-1429},
   MRCLASS = {65N30},
  MRNUMBER = {664882},
}

@Article{ChenShuReview,
  author  = {T. Chen and C.-W. Shu},
  title   = {{Review of entropy stable discontinuous Galerkin methods for systems of conservation laws on unstructured simplex meshes}},
  journal = {CSIAM Trans. Appl. Math.},
  volume  = {1},
  pages   = {1--52},
  year    = {2020},
}

@article {OE_entropystable,
    AUTHOR = {Liu, Y. and Lu, J. and Shu, C.-W.},
     TITLE = {An entropy stable essentially oscillation-free discontinuous
              {G}alerkin method for hyperbolic conservation laws},
   JOURNAL = {SIAM J. Sci. Comput.},
  FJOURNAL = {SIAM Journal on Scientific Computing},
    VOLUME = {46},
      YEAR = {2024},
    NUMBER = {2},
     PAGES = {A1132--A1159},
      ISSN = {1064-8275,1095-7197},
   MRCLASS = {65M12 (65M60 76M20)},
  MRNUMBER = {4726005},
MRREVIEWER = {Xinghui\ Zhong},
}

@article {OE_systems,
    AUTHOR = {Liu, Y. and Lu, J. and Shu, C.-W.},
     TITLE = {An essentially oscillation-free discontinuous {G}alerkin
              method for hyperbolic systems},
   JOURNAL = {SIAM J. Sci. Comput.},
  FJOURNAL = {SIAM Journal on Scientific Computing},
    VOLUME = {44},
      YEAR = {2022},
    NUMBER = {1},
     PAGES = {A230--A259},
      ISSN = {1064-8275,1095-7197},
   MRCLASS = {65M60 (35L03 76Nxx)},
  MRNUMBER = {4372649},
MRREVIEWER = {Xiaobing\ Henry\ Feng},
}

@article {OE_conservationlaws,
    AUTHOR = {Lu, J. and Liu, Y. and Shu, C.-W.},
     TITLE = {An oscillation-free discontinuous {G}alerkin method for scalar
              hyperbolic conservation laws},
   JOURNAL = {SIAM J. Numer. Anal.},
  FJOURNAL = {SIAM Journal on Numerical Analysis},
    VOLUME = {59},
      YEAR = {2021},
    NUMBER = {3},
     PAGES = {1299--1324},
      ISSN = {0036-1429,1095-7170},
   MRCLASS = {65M60 (35L65 65M12)},
  MRNUMBER = {4257875},
}

@article {BeiraoLopezVacca_HamiltonianWave2d,
    AUTHOR = {Beir\~ao da Veiga, L. and Lopez, L. and Vacca, G.},
     TITLE = {Mimetic finite difference methods for {H}amiltonian wave
              equations in 2{D}},
   JOURNAL = {Comput. Math. Appl.},
  FJOURNAL = {Computers \& Mathematics with Applications. An International
              Journal},
    VOLUME = {74},
      YEAR = {2017},
    NUMBER = {5},
     PAGES = {1123--1141},
      ISSN = {0898-1221,1873-7668},
   MRCLASS = {65M06 (35L20 65M12)},
  MRNUMBER = {3689940},
MRREVIEWER = {Jose\ Alberto\ Cuminato},
}

@Article{Perrier_contraintDG,
  author  = {V. Perrier},
  title   = {{Development of discontinuous Galerkin methods for hyperbolic systems that preserve a curl or a divergence constraint: the case of linear systems}},
  journal = {J. Comput. Phys.},
  volume  = {544},
  pages   = {114445},
  year    = {2026},
}

@article {SanchezSymplecticMaxwell,
    AUTHOR = {S\'anchez, M. A. and Du, S. and Cockburn, B. and Nguyen, N.-C. and Peraire, J.},
     TITLE = {Symplectic {H}amiltonian finite element methods for
              electromagnetics},
   JOURNAL = {Comput. Methods Appl. Mech. Engrg.},
  FJOURNAL = {Computer Methods in Applied Mechanics and Engineering},
    VOLUME = {396},
      YEAR = {2022},
     PAGES = {114969},
      ISSN = {0045-7825,1879-2138},
   MRCLASS = {65M60 (74H15 74J05 74S05 78M10)},
   MRNUMBER = {4425194},
MRREVIEWER = {Agustin\ Mart\'in},
}

@article {AndrewsFarrell25,
    AUTHOR = {Andrews, B. D. and Farrell, P. E.},
     TITLE = {Enforcing conservation laws and dissipation inequalities
              numerically via auxiliary variables},
   JOURNAL = {SIAM J. Sci. Comput.},
  FJOURNAL = {SIAM Journal on Scientific Computing},
    VOLUME = {47},
      YEAR = {2025},
    NUMBER = {6},
     PAGES = {A3516--A3535},
      ISSN = {1064-8275,1095-7197},
   MRCLASS = {65M22 (35L65 65M60)},
  MRNUMBER = {4989363},
MRREVIEWER = {Andrea\ Villa},
}

@article {ArnoldBrezzi85,
    AUTHOR = {Arnold, D. N. and Brezzi, F.},
     TITLE = {Mixed and nonconforming finite element methods:
              implementation, postprocessing and error estimates},
   JOURNAL = {RAIRO Mod\'el. Math. Anal. Num\'er.},
  FJOURNAL = {RAIRO Mod\'elisation Math\'ematique et Analyse Num\'erique},
    VOLUME = {19},
      YEAR = {1985},
    NUMBER = {1},
     PAGES = {7--32},
      ISSN = {0764-583X,1290-3841},
   MRCLASS = {65N30},
  MRNUMBER = {813687},
}

@article {BKSternMaxwell,
    AUTHOR = {Berchenko-Kogan, Y. and Stern, A.},
     TITLE = {Constraint-preserving hybrid finite element methods for
              {M}axwell's equations},
   JOURNAL = {Found. Comput. Math.},
  FJOURNAL = {Foundations of Computational Mathematics. The Journal of the
              Society for the Foundations of Computational Mathematics},
    VOLUME = {21},
      YEAR = {2021},
    NUMBER = {4},
     PAGES = {1075--1098},
      ISSN = {1615-3375,1615-3383},
   MRCLASS = {78M10 (65M60 65N30)},
  MRNUMBER = {4298240},
MRREVIEWER = {Ziqiang\ Wang},
}

@article {BKSternYangMills,
    AUTHOR = {Berchenko-Kogan, Y. and Stern, A.},
     TITLE = {Charge-conserving hybrid methods for the {Y}ang-{M}ills
              equations},
   JOURNAL = {SMAI J. Comput. Math.},
  FJOURNAL = {SMAI Journal of Computational Mathematics},
    VOLUME = {7},
      YEAR = {2021},
     PAGES = {97--119},
      ISSN = {2426-8399},
   MRCLASS = {65M60 (65M55 78A25)},
  MRNUMBER = {4290513},
MRREVIEWER = {JiChun\ Li},
}

@article {McLachlanSternMS,
    AUTHOR = {McLachlan, R. I. and Stern, A.},
     TITLE = {Multisymplecticity of hybridizable discontinuous {G}alerkin
              methods},
   JOURNAL = {Found. Comput. Math.},
  FJOURNAL = {Foundations of Computational Mathematics. The Journal of the
              Society for the Foundations of Computational Mathematics},
    VOLUME = {20},
      YEAR = {2020},
    NUMBER = {1},
     PAGES = {35--69},
      ISSN = {1615-3375,1615-3383},
   MRCLASS = {65N30},
  MRNUMBER = {4056925},
}

@article {SternZampa26,
    AUTHOR = {Stern, A. and Zampa, E.},
     TITLE = {Multisymplecticity in {F}inite {E}lement {E}xterior
              {C}alculus},
   JOURNAL = {Found. Comput. Math.},
  FJOURNAL = {Foundations of Computational Mathematics. The Journal of the
              Society for the Foundations of Computational Mathematics},
    VOLUME = {26},
      YEAR = {2026},
    NUMBER = {3},
     PAGES = {1809--1858},
      ISSN = {1615-3375,1615-3383},
   MRCLASS = {70G45 (65N30)},
  MRNUMBER = {5010385},
}

@book {EGIII,
    AUTHOR = {Ern, A. and Guermond, J.-L.},
     TITLE = {Finite elements {III}---first-order and time-dependent {PDE}s},
    SERIES = {Texts Appl. Math.},
    VOLUME = {74},
 PUBLISHER = {Springer, Cham},
      YEAR = {2021},
     PAGES = {viii+417},
      ISBN = {978-3-030-57347-8; 978-3-030-57348-5},
   MRCLASS = {65-01},
  MRNUMBER = {4248810},
}

@article {SanchezSymplecticWave,
    AUTHOR = {S\'anchez, M. A. and Ciuca, C. and Nguyen, N. C. and Peraire,
              J. and Cockburn, B.},
     TITLE = {Symplectic {H}amiltonian {HDG} methods for wave propagation
              phenomena},
   JOURNAL = {J. Comput. Phys.},
  FJOURNAL = {Journal of Computational Physics},
    VOLUME = {350},
      YEAR = {2017},
     PAGES = {951--973},
      ISSN = {0021-9991,1090-2716},
   MRCLASS = {65M60 (35L20 65M12)},
  MRNUMBER = {3707190},
}

@article {SanchezSymplecticElastodynamics,
    AUTHOR = {S\'anchez, M. A. and Cockburn, B. and Nguyen, N.-C. and Peraire, J.},
     TITLE = {Symplectic {H}amiltonian finite element methods for linear
              elastodynamics},
   JOURNAL = {Comput. Methods Appl. Mech. Engrg.},
  FJOURNAL = {Computer Methods in Applied Mechanics and Engineering},
    VOLUME = {381},
      YEAR = {2021},
     PAGES = {113843},
      ISSN = {0045-7825,1879-2138},
   MRCLASS = {65M60 (74H15 74J05 74S05)},
  MRNUMBER = {4248352},
}

@article {SanchezSymplecticSemilinearwave,
    AUTHOR = {S\'anchez, M. A. and Valenzuela, J.},
     TITLE = {Symplectic {H}amiltonian finite element methods for semilinear
              wave propagation},
   JOURNAL = {J. Sci. Comput.},
  FJOURNAL = {Journal of Scientific Computing},
    VOLUME = {99},
      YEAR = {2024},
    NUMBER = {3},
     PAGES = {62},
      ISSN = {0885-7474,1573-7691},
   MRCLASS = {65M60 (65P10)},
  MRNUMBER = {4735361},
MRREVIEWER = {Hendrik\ Ranocha},
}

@article {EggerDissipative,
    AUTHOR = {Egger, H.},
     TITLE = {Structure preserving approximation of dissipative evolution
              problems},
   JOURNAL = {Numer. Math.},
  FJOURNAL = {Numerische Mathematik},
    VOLUME = {143},
      YEAR = {2019},
    NUMBER = {1},
     PAGES = {85--106},
      ISSN = {0029-599X,0945-3245},
   MRCLASS = {65M60 (37K05 37L65 47J35)},
  MRNUMBER = {3987168},
}

@Misc{AndrewsFarrell26,
  author = {B. D. Andrews and P. E. Farrell},
  title  = {{Conservative and dissipative discretisations of multi-conservative ODEs and GENERIC systems}},
  note   = {arXiv:2511.23266},
  year   = {2025},
}

@Article{ChanEntropyStableCorrectionDG,
  author  = {J. Chan},
  title   = {{An artificial viscosity approach to high order entropy stable discontinuous Galerkin methods}},
  journal = {J. Comput. Phys.},
  volume  = {543},
  pages   = {114380},
  year    = {2025},
}

@Misc{SternZampa27,
  author = {A. Stern and E. Zampa},
  title  = {{Finite element exterior calculus for time-dependent Hamiltonian partial differential equations}},
  note   = {arXiv:2601.00103},
  year   = {2026},
}

@article {AbgrallCorrection,
    AUTHOR = {Abgrall, R.},
     TITLE = {A general framework to construct schemes satisfying additional
              conservation relations. {A}pplication to entropy conservative
              and entropy dissipative schemes},
   JOURNAL = {J. Comput. Phys.},
  FJOURNAL = {Journal of Computational Physics},
    VOLUME = {372},
      YEAR = {2018},
     PAGES = {640--666},
      ISSN = {0021-9991,1090-2716},
   MRCLASS = {65N30},
  MRNUMBER = {3847450},
}

@article {BustoDumbserNewClass,
    AUTHOR = {Busto, S. and Dumbser, M.},
     TITLE = {A new class of simple, general and efficient finite volume
              schemes for overdetermined thermodynamically compatible
              hyperbolic systems},
   JOURNAL = {Commun. Appl. Math. Comput.},
  FJOURNAL = {Communications on Applied Mathematics and Computation},
    VOLUME = {6},
      YEAR = {2024},
    NUMBER = {3},
     PAGES = {1742--1778},
      ISSN = {2096-6385,2661-8893},
   MRCLASS = {35L40 (65M08)},
  MRNUMBER = {4772608},
MRREVIEWER = {Dan\ Ling},
}

@Article{HTCGPR,
  author  = {S. Busto and M. Dumbser and I. Peshkov and E. Romenski},
  title   = {{On thermodynamically compatible finite volume schemes for continuum mechanics}},
  journal = {SIAM J. Sci. Comput.},
  volume  = {44},
  pages   = {A1723--A1751},
  year    = {2022},
}

@article {BustoDumbserMHD,
    AUTHOR = {Busto, S. and Dumbser, M.},
     TITLE = {A new thermodynamically compatible finite volume scheme for
              magnetohydrodynamics},
   JOURNAL = {SIAM J. Numer. Anal.},
  FJOURNAL = {SIAM Journal on Numerical Analysis},
    VOLUME = {61},
      YEAR = {2023},
    NUMBER = {1},
     PAGES = {343--364},
      ISSN = {0036-1429,1095-7170},
   MRCLASS = {65M08 (35L40 76M12 76W05)},
  MRNUMBER = {4550689},
}

@article {AbgrallBustoDumbserGeneralFramework,
    AUTHOR = {Abgrall, R. and Busto, S. and Dumbser, M.},
     TITLE = {A simple and general framework for the construction of
              thermodynamically compatible schemes for computational fluid
              and solid mechanics},
   JOURNAL = {Appl. Math. Comput.},
  FJOURNAL = {Applied Mathematics and Computation},
    VOLUME = {440},
      YEAR = {2023},
     PAGES = {127629},
      ISSN = {0096-3003,1873-5649},
   MRCLASS = {65M08 (65M12 65M60 76B15)},
  MRNUMBER = {4514646},
}

@article {RebholzHelicity,
    AUTHOR = {Rebholz, L. G.},
     TITLE = {An energy- and helicity-conserving finite element scheme for
              the {N}avier-{S}tokes equations},
   JOURNAL = {SIAM J. Numer. Anal.},
  FJOURNAL = {SIAM Journal on Numerical Analysis},
    VOLUME = {45},
      YEAR = {2007},
    NUMBER = {4},
     PAGES = {1622--1638},
      ISSN = {0036-1429,1095-7170},
   MRCLASS = {65M60 (76D05 76M10)},
  MRNUMBER = {2338402},
MRREVIEWER = {Srinivasan\ Kesavan},
}

@incollection {Barth99,
    AUTHOR = {Barth, T. J.},
     TITLE = {Numerical methods for gasdynamic systems on unstructured
              meshes},
 BOOKTITLE = {An introduction to recent developments in theory and numerics
              for conservation laws ({F}reiburg/{L}ittenweiler, 1997)},
    SERIES = {Lect. Notes Comput. Sci. Eng.},
    VOLUME = {5},
     PAGES = {195--285},
 PUBLISHER = {Springer, Berlin},
      YEAR = {1999},
      ISBN = {3-540-65081-4},
   MRCLASS = {76M10 (35L65 65M60 76M12 76N15)},
  MRNUMBER = {1731618},
MRREVIEWER = {Jean-Luc\ Guermond},
}

@InProceedings{ZakerzadehMay,
  author    = {M. Zakerzadeh and G. May},
  title     = {{Entropy stable discontinuous Galerkin scheme for the compressible Navier--Stokes equations}},
  booktitle = {55th AIAA Aerospace Sciences Meeting},
  note      = {AIAA Paper 2017-0084},
  year      = {2017},
}

@article {Chan18,
    AUTHOR = {Chan, J.},
     TITLE = {On discretely entropy conservative and entropy stable
              discontinuous {G}alerkin methods},
   JOURNAL = {J. Comput. Phys.},
  FJOURNAL = {Journal of Computational Physics},
    VOLUME = {362},
      YEAR = {2018},
     PAGES = {346--374},
      ISSN = {0021-9991,1090-2716},
   MRCLASS = {65M60 (65M12 76M10 76Nxx)},
  MRNUMBER = {3774937},
MRREVIEWER = {Mir\ Sajjad\ Hashemi},
}

@article {Ortleb2017,
    AUTHOR = {Ortleb, S.},
     TITLE = {A kinetic energy preserving {DG} scheme based on
              {G}auss-{L}egendre points},
   JOURNAL = {J. Sci. Comput.},
  FJOURNAL = {Journal of Scientific Computing},
    VOLUME = {71},
      YEAR = {2017},
    NUMBER = {3},
     PAGES = {1135--1168},
      ISSN = {0885-7474,1573-7691},
   MRCLASS = {65M60 (35L65 65M08 76P05 82C40)},
  MRNUMBER = {3640675},
}

@article {RanochaOffnerSonar2017,
    AUTHOR = {Ranocha, H. and \"Offner, P. and Sonar, T.},
     TITLE = {Extended skew-symmetric form for summation-by-parts operators
              and varying {J}acobians},
   JOURNAL = {J. Comput. Phys.},
  FJOURNAL = {Journal of Computational Physics},
    VOLUME = {342},
      YEAR = {2017},
     PAGES = {13--28},
      ISSN = {0021-9991,1090-2716},
   MRCLASS = {65M06 (65M12)},
  MRNUMBER = {3649262},
}

@article{Ranocha17,
  title = {Shallow water equations: Split-form, entropy stable, well-balanced, and positivity preserving numerical methods},
  author = {H. Ranocha},
  journal = {GEM Int. J. Geomath.},
  year = {2017},
  volume = {8},
  number = {1},
  pages = {85--133},
}

@article {ChenShu17,
    AUTHOR = {Chen, T. and Shu, C.-W.},
     TITLE = {Entropy stable high order discontinuous {G}alerkin methods
              with suitable quadrature rules for hyperbolic conservation
              laws},
   JOURNAL = {J. Comput. Phys.},
  FJOURNAL = {Journal of Computational Physics},
    VOLUME = {345},
      YEAR = {2017},
     PAGES = {427--461},
      ISSN = {0021-9991,1090-2716},
   MRCLASS = {65M60 (65M12)},
  MRNUMBER = {3667622},
MRREVIEWER = {Gerhard\ Starke},
}

@article {DGMaxwell_PipernoEtAl,
    AUTHOR = {Fezoui, L. and Lanteri, S. and Lohrengel, S. and Piperno, S.},
     TITLE = {Convergence and stability of a discontinuous {G}alerkin
              time-domain method for the 3{D} heterogeneous {M}axwell
              equations on unstructured meshes},
   JOURNAL = {M2AN Math. Model. Numer. Anal.},
  FJOURNAL = {M2AN. Mathematical Modelling and Numerical Analysis},
    VOLUME = {39},
      YEAR = {2005},
    NUMBER = {6},
     PAGES = {1149--1176},
      ISSN = {0764-583X,1290-3841},
   MRCLASS = {65M60 (78M10)},
  MRNUMBER = {2195908},
MRREVIEWER = {Lech\ Slawik},
}

@TechReport{ngSolve,
  author      = {J. Sch{\"o}berl},
  title       = {{C++11 implementation of finite elements in NGSolve}},
  institution = {Institute for Analysis and Scientific Computing, TU Wien},
  number      = {ASC-2014-30},
  type        = {Technical Report},
  year        = {2014},
}

@article {DGlinearPH,
    AUTHOR = {Cheng, X. and van der Vegt, J. J. W. and Xu, Y. and Zwart, H. J.},
     TITLE = {Discontinuous {G}alerkin finite element methods for linear
              port-{H}amiltonian dynamical systems},
   JOURNAL = {J. Sci. Comput.},
  FJOURNAL = {Journal of Scientific Computing},
    VOLUME = {104},
      YEAR = {2025},
    NUMBER = {1},
     PAGES = {8},
      ISSN = {0885-7474,1573-7691},
   MRCLASS = {65M60 (65M12)},
  MRNUMBER = {4907637},
}

@article {DGPH,
    AUTHOR = {Kumar, N. and van der Vegt, J. J. W. and Zwart, H. J.},
     TITLE = {Port-{H}amiltonian discontinuous {G}alerkin finite element
              methods},
   JOURNAL = {IMA J. Numer. Anal.},
  FJOURNAL = {IMA Journal of Numerical Analysis},
    VOLUME = {45},
      YEAR = {2025},
    NUMBER = {1},
     PAGES = {354--403},
      ISSN = {0272-4979,1464-3642},
   MRCLASS = {65N30 (58A10 65N12)},
  MRNUMBER = {4858500},
MRREVIEWER = {M\'onika\ Polner},
}

@Article{Tadmor1,
  author  = {E. Tadmor},
  journal = {Math. Comp.},
  title   = {{The numerical viscosity of entropy stable schemes for systems of conservation laws I}},
  year    = {1987},
  pages   = {91--103},
  volume  = {49},
}

@Article{Tadmor2003,
  author = 	 {E. Tadmor},
  title = 	 {Entropy stability theory for difference approximations of nonlinear conservation laws and related time dependent problems},
  journal = 	 {Acta Numer.},
  year = 	 2003,
  volume = 	 12,
  pages = 	 {451--512}}

@Article{HTCDG,
  author   = {S. Busto and M. Dumbser},
  journal  = {J. Sci. Comput.},
  title    = {{A new family of thermodynamically compatible discontinuous Galerkin methods for continuum mechanics and turbulent shallow water flows}},
  year     = {2022},
  number   = {2},
  pages    = {56},
  volume   = {93},
  fjournal = {Journal of Scientific Computing},
}

@Article{FisherCarpenter2013,
  author  = {T.C. Fisher and M.H. Carpenter},
  title   = {{High-order entropy stable finite difference schemes for nonlinear conservation laws: Finite domains}},
  journal = {J. Comput. Phys.},
  volume  = {252},
  pages   = {518--557},
  year    = {2013},
}

@Article{CarpenterFisherNielsenFrankel2014,
  author  = {M. Carpenter and T. Fisher and E. Nielsen and S. Frankel},
  title   = {{Entropy stable spectral collocation schemes for the Navier--Stokes equations: Discontinuous interfaces}},
  journal = {SIAM J. Sci. Comput.},
  volume  = {36},
  pages   = {B835--B867},
  year    = {2014},
}

@Article{Gassner2013,
  author  = {G.J. Gassner},
  title   = {{A skew-symmetric discontinuous Galerkin spectral element discretization and its relation to SBP-SAT finite difference methods}},
  journal = {SIAM J. Sci. Comput.},
  volume  = {35},
  pages   = {A1233--A1253},
  year    = {2013},
}

@Article{GassnerWintersKopriva2016,
  author  = {G.J. Gassner and A.R. Winters and D.A. Kopriva},
  title   = {{Split form nodal discontinuous Galerkin schemes with summation-by-parts property for the compressible Euler equations}},
  journal = {J. Comput. Phys.},
  volume  = {327},
  pages   = {39--66},
  year    = {2016},
}

@Article{LuccaDumbser2026,
  author  = {A. Lucca and M. Dumbser},
  title   = {{Structure-preserving schemes for nonlinear symmetric hyperbolic and thermodynamically compatible systems of partial differential equations}},
  journal = {J. Sci. Comput.},
  volume  = {107},
  pages   = {16},
  year    = {2026},
}

@Article{DumbserLuccaPeshkovZanotti2025,
  author  = {M. Dumbser and A. Lucca and I. Peshkov and O. Zanotti},
  title   = {{Variational derivation and compatible discretizations of the Maxwell-GLM system}},
  journal = {Proc. R. Soc. A},
  volume  = {481},
  pages   = {20240864},
  year    = {2025},
}

@Article{Ruggeri81,
  author  = {T. Ruggeri and A. Strumia},
  title   = {{Main field and convex covariant density for quasilinear hyperbolic systems. Relativistic fluid dynamics}},
  journal = {Ann. Inst. H. Poincar\'e Sect. A (N.S.)},
  year    = {1981},
  volume  = {34},
  pages   = {65--84},
}

@Article{Ketcheson2019,
  author  = {D. I. Ketcheson},
  title   = {{Relaxation Runge--Kutta methods: conservation and stability for inner-product norms}},
  journal = {SIAM J. Numer. Anal.},
  volume  = {57},
  pages   = {2850--2870},
  year    = {2019},
}

@Article{RanochaEtAl2020,
  author  = {H. Ranocha and M. Sayyari and L. Dalcin and M. Parsani and D. I. Ketcheson},
  title   = {{Relaxation Runge--Kutta methods: fully discrete explicit entropy-stable schemes for the compressible Euler and Navier--Stokes equations}},
  journal = {SIAM J. Sci. Comput.},
  volume  = {42},
  pages   = {A612--A638},
  year    = {2020},
}

@Article{RanochaDalcinParsani2020,
  author  = {H. Ranocha and L. Dalcin and M. Parsani},
  title   = {{Fully discrete explicit locally entropy-stable schemes for the compressible Euler and Navier--Stokes equations}},
  journal = {Comput. Math. Appl.},
  volume  = {80},
  pages   = {1343--1359},
  year    = {2020},
}

@Article{Perrier2,
  author  = {V. Perrier},
  title   = {{Discrete de-Rham complex involving a discontinuous finite element space for velocities: the case of periodic straight triangular and Cartesian meshes}},
  journal = {Ann. Henri Lebesgue},
  volume  = {8},
  pages   = {417--452},
  year    = {2025},
}

@Article{Perrier3,
  author  = {J. Jung and V. Perrier},
  title   = {{A curl preserving finite volume scheme by space velocity enrichment. Application to the low Mach number accuracy problem}},
  journal = {J. Comput. Phys.},
  volume  = {515},
  pages   = {113252},
  year    = {2024},
}

@Article{HTCLagrangeGPR,
  author  = {W. Boscheri and M. Dumbser and R. Loub{\`e}re and P.-H. Maire},
  title   = {{A structure-preserving and thermodynamically compatible cell-centered Lagrangian finite volume scheme for continuum mechanics}},
  journal = {SIAM J. Numer. Anal.},
  volume  = {64},
  pages   = {685--707},
  year    = {2026},
}

@Article{ErnGuermond2023,
  author  = {A. Ern and J.-L. Guermond},
  title   = {{The discontinuous {G}alerkin approximation of the grad-div and curl-curl operators in first-order form is involution-preserving and spectrally correct}},
  journal = {SIAM J. Numer. Anal.},
  volume  = {61},
  pages   = {2940--2966},
  year    = {2023},
}

@Article{CompatibleDG1,
  author  = {R. Abgrall and M. Dumbser and P.-H. Maire},
  title   = {{A simple and general framework for the construction of exactly div-curl-grad compatible discontinuous Galerkin finite element schemes on unstructured simplex meshes}},
  journal = {J. Comput. Phys.},
  volume  = {541},
  pages   = {114340},
  year    = {2025},
}

@Article{CockburnLiShu2004,
  author  = {B. Cockburn and F. Li and C.-W. Shu},
  title   = {{Locally divergence-free discontinuous Galerkin methods for the Maxwell equations}},
  journal = {J. Comput. Phys.},
  volume  = {194},
  pages   = {588--610},
  year    = {2004},
}

@Misc{FourSplitUS,
  author = {M. Dumbser and W. Boscheri and M. Tavelli and A. Thomann},
  title  = {{A structure-preserving staggered semi-implicit four-split finite volume scheme for continuum mechanics on unstructured meshes}},
  note   = {arXiv:2609.25935},
  year   = {2026},
}

@Article{JiangShu1994,
  author  = {G.-S. Jiang and C.-W. Shu},
  title   = {{On a cell entropy inequality for discontinuous Galerkin methods}},
  journal = {Math. Comp.},
  volume  = {62},
  pages   = {531--538},
  year    = {1994},
}

@Article{FriedrichEtAl2019,
  author  = {L. Friedrich and G. Schn{\"u}cke and A. R. Winters and D. C. {Del Rey Fern{\'a}ndez} and G. J. Gassner and M. H. Carpenter},
  title   = {{Entropy stable space--time discontinuous Galerkin schemes with summation-by-parts property for hyperbolic conservation laws}},
  journal = {J. Sci. Comput.},
  volume  = {80},
  pages   = {175--222},
  year    = {2019},
}

@Article{GaburroEtAl2023,
  author  = {E. Gaburro and P. {\"O}ffner and M. Ricchiuto and D. Torlo},
  title   = {{High order entropy preserving ADER-DG schemes}},
  journal = {Appl. Math. Comput.},
  volume  = {440},
  pages   = {127644},
  year    = {2023},
}

@Article{BetschSteinmann2000,
  author  = {P. Betsch and P. Steinmann},
  title   = {{Inherently energy conserving time finite elements for classical mechanics}},
  journal = {J. Comput. Phys.},
  volume  = {160},
  pages   = {88--116},
  year    = {2000},
}

@Article{QuispelMcLaren2008,
  author  = {G. R. W. Quispel and D. I. McLaren},
  title   = {{A new class of energy-preserving numerical integration methods}},
  journal = {J. Phys. A: Math. Theor.},
  volume  = {41},
  pages   = {045206},
  year    = {2008},
}

@Article{Hairer2010,
  author  = {E. Hairer},
  title   = {{Energy-preserving variant of collocation methods}},
  journal = {JNAIAM J. Numer. Anal. Ind. Appl. Math.},
  volume  = {5},
  pages   = {73--84},
  year    = {2010},
}

@Book{BrugnanoIavernaro2016,
  author    = {L. Brugnano and F. Iavernaro},
  title     = {{Line Integral Methods for Conservative Problems}},
  publisher = {Chapman \& Hall/CRC},
  address   = {Boca Raton},
  year      = {2016},
}

@Article{BrugnanoIavernaro2018,
  author  = {L. Brugnano and F. Iavernaro},
  title   = {{Line integral solution of differential problems}},
  journal = {Axioms},
  volume  = {7},
  pages   = {36},
  year    = {2018},
}

@BOOK{toro-book, 
  AUTHOR =       {E. F. Toro}, 
  TITLE =        {{Riemann} Solvers and Numerical Methods for Fluid
                  Dynamics}, 
  PUBLISHER =    {Springer}, 
  YEAR =         {2009}, 
}

@article{GJP25, 
        author={S. G{\'o}mez and A. J{\"u}ngel and I. Perugia},
        title={{Structure-preserving Local Discontinuous Galerkin method for nonlinear cross-diffusion systems}},
        journal={IMA J. Numer. Anal.},
        pages={draf140},
        year={2026},
        month = {04}
}
\end{document}